\newtheorem{theorem}{Theorem}[section]
\newtheorem{lemma}[theorem]{Lemma}
\theoremstyle{definition}
\newtheorem{proposition}[theorem]{Proposition}
\newtheorem{corollary}[theorem]{Corollary}
\theoremstyle{remark}
\numberwithin{equation}{section}
\newcommand{\nc}{\newcommand}
\renewcommand{\frak}{\mathfrak}
\providecommand{\cal}{\mathcal}
\renewcommand{\bold}{\mathbf}
\newcommand{\ZZ}{{\mathbb Z}}
\nc \Ab{{\ensuremath{\bold A}}}
\nc \ab{{\ensuremath{\bold a}}}
\nc \bb{{\ensuremath{\bold b}}}
\nc \cb{{\ensuremath{\bold c}}}
\nc \Bb{{\ensuremath{\bold B}}}
\nc \Gb{{\ensuremath{\bold G}}}
\nc \Qb{{\ensuremath{\bold Q}}}
\nc \Rb{{\ensuremath{\bold R}}} \nc \Cb{{\ensuremath{\bold C}}} 
\nc \Eb{{\ensuremath{\bold E}}}
\nc \eb{{\ensuremath{\bold e}}}
\nc \Db{{\ensuremath{\bold D}}}
\nc \Fb{{\ensuremath{\bold F}}}
\nc \ib{{\ensuremath{\bold i}}}
\nc \jb{{\ensuremath{\bold j}}}
\nc \kb{{\ensuremath{\bold k}}}
\nc \nb{{\ensuremath{\bold n}}}
\nc \rb{{\ensuremath{\bold r}}}
\nc \Pb{{\ensuremath{\bold P}}}
\nc \pb{{\ensuremath{\bold p}}}
\nc \SPb{{\ensuremath{\bold {SP}}}}
\nc \Zb{{\ensuremath{\bold Z}}} 
\nc \zb{{\ensuremath{\bold z}}} 
\nc \gb{{\ensuremath{\bold g}}} 
\nc \fb{{\ensuremath{\bold f}}} 
\nc \ub{{\ensuremath{\bold u}}} 
\nc \vb{{\ensuremath{\bold v}}} 
\nc \yb{{\ensuremath{\bold y}}} 
\nc \xb{{\ensuremath{\bold x}}} 
\nc \xib{{\ensuremath{\bold \xi}}} 
\nc \Nb{{\ensuremath{\bold N}}} 
\nc \Hb{{\ensuremath{\bold H}}} 
\nc \wb{{\ensuremath{\bold w}}} 
\nc \Wb{{\ensuremath{\bold W}}} 
\nc \syz{{\mathbf {syz}}}
\nc \bnoll{{\ensuremath{\bold 0}}} 
\nc \mf{\frak m} \nc \mh{\hat{\m}} 
\nc \nf{\frak n}
\nc \Of{\frak O}
\nc \rf{\frak r}
\nc \mufr{{\mathbf \mu}}
\nc \hf{\frak h} 
\nc \qf{\frak q} 
\nc \bfr{\frak b} 
\nc \kfr{\frak k} 
\nc \pfr{\frak p} 
\nc \af{\frak a }
\nc \cf{\frak c }
\nc \sfr{\frak s} 
\nc \ufr{\frak u} 
\nc \g{\frak g} 
\nc \gA{\g_{\Ao}} 
\nc \lfr{\frak l}
\nc \afr{\frak a}
\nc \gfh{\hat {\frak g}}
\nc \gl{\frak { gl }}
\nc \Sl{\frak {sl}}
\nc \SU{\frak {SU}}
\nc{\Homf}{\frak{Hom}}
\newcommand{\on}{\operatorname}
\nc\hankel{\on {Hankel}}
\nc\row{\on {row\ }}
\nc\nullity{\on {nullity }}
\nc\col{\on {col\ }}
\nc\rowm{\on {Row \ }}
\nc\loc{\on {lc \ }}
\nc\nullo{\on {null\ }}
\nc\Nul{\on {Nul\ }}
\nc \Ann {\on {Ann }}
\nc \Ass {\on {Ass \ }}
\nc \Coker {\on {Coker}}
\nc \Co{\on C}
\nc \Homo{\on {Hom}}
\nc \Ker {\on {Ker}}
\nc \omod{\on {mod}}
\nc \No {\on N}
\nc \NN {\on {NN}}
\nc \NGo {\on {NG}}
\nc \Oo {\on O}
\nc \ch {\on {ch}}
\nc \rko {\on {rk}}
\nc \Sing {\on {Sing\ }}
\nc \Reg {\on {Reg}}
\nc \CoI {\on {CI}}
\nc \CoM {\on {CM}}
\nc \Gor {\on {Gor}}
\nc \Type {\on {Type}}
\nc \can {\on {can}}
\nc \Top {\on {T}}
\nc \rel {\on {rel}}
\nc \sgn {\on {sgn }}
\nc \trdeg {\on {tr.deg}}
\nc \codim {\on {codim }}
\nc \coht {\on {coht}}
\nc \divo {\on {div \ }}
\nc \coh {\on {coh}}
\nc \Clo {\on {Cl}}
\nc \embdim{\on {embdim}}
\nc \embcodim{\on {embcodim \ }}
\nc \qcoh {\on {qcoh}}
\nc \grad {\on {grad}\ }
\nc \grade {\on {grade}}
\nc \hto {\on {ht}}
\nc \depth {\on {depth}}
\nc \prof {\on {prof}}
\nc \reso{\on {res}}
\nc \ind{\on {ind}}
\nc \prodo{\on {prod}}
\nc \coind{\on {coind}}
\nc \Con{\on {Con}}
\nc \Crit{\on {Crit}}
\nc \Der{\on {Der}}
\nc \Char{\on {Char}}
\nc \Ch{\on {Ch}}
\nc \Ext{\on {Ext}}
\nc \Eo{\on {E}}
\nc \End{\on {End}}
\nc \Ad{\on {Ad}}
\nc \gr{\on {gr}}
\nc \Fo{\on {F}}
\nc \Gr{\on {Gr}}
\nc \Go{\on {G}}
\nc \GFo{\on {GF}}
\nc \Glo{\on {Gl}}
\nc \Ho{\on {H}}
\nc \CMo{\on {\CM}}
\nc \SCM{\on {SCM}}
\nc \hol{\on {hol}}
\nc{\sgd}{\on{sgd}}
\nc \supp{\on {supp}}
\nc \ssupp{\on {s-supp}}
\nc \singsupp{\on {singsupp}}
\nc \msupp{\on {msupp}}
\nc \spec{\on {spec}}
\nc \spano{\on {span }}
\nc \Span{\on {Span }}
\nc \Max{\on {Max}}
\nc \Min{\on {Min}}
\nc \Mod{\on {Mod}}
\nc \Rad {\on {Rad}}
\nc \rad {\on {rad}}
\nc \rank {\on {rank}}
\nc \range {\on {range}}
\nc \Slo{\on {SL}}
\nc \soc {\on {soc}}
\nc \Irr {\on {Irr}}
\nc \Imo {\on {Im}}
\nc \SSo{\on {SS}}
\nc \lub{\on {lub}}
\nc \gldim{\on {gl.d.}}
\nc \pdo{\on {p.d.}} 
\nc \ido{\on {i.d.}} 
\nc \dSSo{\dot {\SSo}}
\nc \So{\on S}
\nc \Io{\on I}
\nc \Jo{\on J}
\nc \jo{\on j}
\nc \Ko{\on K}
\nc \PBW{\Ac_{PBW}}
\nc \Ro{\on R}
\nc \To{\on T}
\nc \Ao{\on A}
\nc \Do{{\on D}}
\nc \Bo{\on B}
\nc \Po{\on P}
\nc \Qo{\on Q}
\nc \Zo{\on Z}
\nc \U{\on U}
\nc \wt{\on {wt}}
\nc \Uh{\hat {\U}}
\nc \T{\on T}
\nc \Lo{\on L}
\nc{\dop}{\on d}
\nc{\eo}{\on e}
\nc{\ado}{\on{ad}}
\nc{\Tot}{\on{Tot}}
\nc{\Aut}{\on{Aut}}
\nc{\sinc}{\on {sinc}}
\nc{\overrightleftarrows}[2]{\overset{#1}{\underset{#2}{\rightleftarrows}}}
\nc{\CCF}{\cal{CF}}
\nc{\CDF}{\cal{DF}}
\nc{\CHC}{\check{\cal C}}
\nc{\Cone}{\on{Cone}}
\nc{\dec}{\on{dec}}
\nc{\Diff}{\on{Diff}}
\nc{\dirlim}{\underset{\to}{\on{lim}}}
\nc{\dpar}{\partial}
\nc{\GL}{\on{GL}}
\nc{\CGr}{\cal{G}r}
\nc{\pr}{\on{pr}}
\nc{\semid}{|\!\!\!\times}
\nc{\Hom}{\on{Hom}}
\nc \RHom{\on {RHom}}
\nc \Proj{\mathrm {Proj\ }}
\nc \proj{\mathrm {proj}}
\nc{\Id}{\on{Id}}
\nc{\id}{\on{id}}
\nc{\Ima}{\on{Im}}
\nc{\invtimes}{\underset{\gets}{\otimes}}
\nc{\invlim}{\underset{\gets}{\on{lim}}}
\nc{\Lie}{\on{Lie}}
\nc{\re}{\on{Re }}
\nc{\Pic}{\on{Pic }}
\nc{\LPic}{\on{LPic }}
\nc{\Sch}{\on{Sch}}
\nc{\Sh}{\on{Sh}}
\nc{\Set}{\on{Set}}
\nc{\spo}{\on{sp\  }}
\nc{\Spec}{\on{Spec}}
\nc{\mSpec}{\on{mSpec}}
\nc{\Specb}{\bold {Spec}}
\nc{\Projb}{\bold {Proj}}
\nc{\Specan}{\on{Specan}}
\nc{\Spo}{\on{Sp}}
\nc{\Spf}{\on{Spf}}
\nc{\sym}{\on{sym}}
\nc{\symm}{\on{symm}}
\nc{\rop}{\on{r}}
\nc{\Td}{\on{Td}}
\nc{\Tor}{\on{Tor}}
\nc{\Artin}{\cal{A}rtin}
\nc{\Dgcoalg}{\cal{D}gcoalg}
\nc{\Dglie}{\cal{D}glie}
\nc{\Ens}{\cal{E}ns}
\nc{\Fsch}{\cal{F}sch}
\nc{\Groupoids}{\cal{G}roupoids}
\nc{\Holie}{\cal{H}olie}
\nc{\Mor}{\cal{M}or}
\nc{\CF}{\ensuremath{\cal{F}}}
\nc \Kc{\ensuremath{\cal K}}
\nc \Lc{{\ensuremath{\cal L}}}
\nc \lcc{{\mathcal l}} 
\nc \CC{{\ensuremath{\cal C}}} 
\nc \Cc{{\ensuremath {\cal C}}}
\nc \Pc{{\ensuremath{\cal P}}}
\nc \Dc{\ensuremath{\mathcal D}}
\nc \Ac{{\ensuremath{\cal A}}} 
\nc \Bc{{\ensuremath{\cal B}}}
\nc \Ec{{\ensuremath{\cal E}}}
\nc \Fc{{\ensuremath{\cal F}}}
\nc \Mcc{{\ensuremath{\cal M}}} 
\nc \hM{\hat{\Mcc}} 
\nc \bM{\bar {\Mcc}} 
\nc\hbM{\hat{\bar \Mcc}}  
\nc \Nc{{\ensuremath{\cal N}}}
\nc \Hc{{\ensuremath{\cal H}}} 
\nc \Ic{{\ensuremath{\cal I}}} 
\nc \Oc{\ensuremath{{\cal O}}}
\nc \Och{\hat{\cal O}} 
\nc \Sc{{\ensuremath{{\cal S}}}}
\nc \Tc{\ensuremath{{\cal T}}} 
\nc \Vc{{\ensuremath{{\cal V}}}} 
\nc{\CA}{{\ensuremath{{\cal A}}}}
\nc{\CB}{{\ensuremath{{\cal B}}}}
\nc{\C}{{\ensuremath{{\cal F}}}}
\nc{\Gc}{{\ensuremath{{\cal G}}}}
\nc{\CH}{\ensuremath{\mathcal H}}
\nc{\CI}{{\ensuremath{{\cal I}}}}
\nc{\CM}{{\ensuremath{{\cal M}}}}
\nc{\CN}{{\ensuremath{{\cal N}}}}
\nc{\CO}{{\ensuremath{{\cal O}}}}
\nc{\Rc}{{\ensuremath{{\cal R}}}}
\nc{\CT}{{\ensuremath{\mathcal T}}}
\nc{\CU}{\ensuremath{{\cal U}}}
\nc{\CV}{\ensuremath{{\cal V}}}
\nc{\CZ}{\ensuremath{{\cal Z}}}
\nc{\Homc}{\ensuremath{{\cal {Hom}}}}
\nc{\Tab}{\ensuremath{{\mbox{Tab}}}}
\nc{\STab}{\ensuremath{{\mbox{STab}}}}
\nc{\fa}{\frak{a}}
\nc{\fA}{\frak{A}}
\nc{\fg}{\frak{g}}
\nc{\fh}{\frak{h}}
\nc{\fI}{\frak{I}}
\nc{\fK}{\frak{K}}
\nc{\fm}{\frak{m}}
\nc{\fP}{\frak{P}}
\nc{\fS}{\frak{S}}
\nc{\ft}{\frak{t}}
\nc{\fX}{\frak{X}}
\nc{\fY}{\frak{Y}}
\nc{\bF}{\bar{F}}
\nc{\bCP}{\bar{\cal{P}}}
\nc{\bm}{\mbox{\bf{m}}}
\nc{\bT}{\mbox{\bf{T}}}
\nc{\bS}{\mbox{\bf{S}}}
\nc{\hB}{\hat{B}}
\nc{\hC}{\hat{C}}
\nc{\hP}{\hat{P}}
\nc{\htest}{\hat P}
\nc{\nen}{\newenvironment}
\nc{\ol}{\overline}
\nc{\ul}{\underline}
\nc{\ra}{\to}
\nc{\lla}{\longleftarrow}
\nc{\lra}{\longrightarrow}
\nc{\Lra}{\Longrightarrow}
\nc{\Lla}{\Longleftarrow}
\nc{\Llra}{\Longleftrightarrow}
\nc{\hra}{\hookrightarrow}
\nc{\iso}{\overset{\sim}{\lra}}
\nc{\dsize}{\displaystyle}
\nc{\sst}{\scriptstyle}
\nc{\tsize}{\textstyle}
\newcommand{\la}{\lambda}
\newcommand {\bC} {\mathbb C}
\newcommand{\bN}{\mathbb N}
\newcommand {\D} {\mathcal D}
\newcommand{\bla}{{\bf\lambda}}
\newcommand{\DD}{{\mathcal{D}}_V}
\newcommand{\OO}{{\mathcal{O}}_U}
\begin{document}
\title{Invariant differential operators and the generalized symmetric group}

\author{Ibrahim Nonkan\'e, Lat\'evi M. Lawson}

\address{Departement d'\'economie et de math\'ematiques appliqu\'ees, IUFIC, Universit\'e Thomas Sankara, Burkina faso}
\email{ibrahim.nonkane@uts.bf}

\address{ African Institute for Mathematical Sciences (AIMS), Summerhill Estates, East Legon Hills, Santoe, Accra,  Ghana P.O. Box LG DTD 20046, Legon, Accra, Ghana}
\email{latevi@aims.edu.gh}

\maketitle
\begin{abstract}
In this paper we study  the decomposition of the direct image of  $\pi_+(\Oc_{X})$ the polynomial ring  $\Oc_X$ as a $\D$-module, under the map $\pi: \spec \Oc_{X} \to \spec \Oc_{X}^{G(r,n)}$, where $\Oc_{X}^{G(r,n)}$  is the ring of invariant polynomial under the action of the wreath product $G(r,p):= \ZZ / r \ZZ \wr \Sc_n $. We first describe the generators of the simple components of $\pi_+(\Oc_X)$ and give their multiplicities. Using an equivalence of categories and the higher Specht polynomials, we describe a $\D$-module decomposition of the  of the polynomial ring localized  at the discriminant of $\pi$. Furthermore, we study the action invariants differential operators on the higher Specht polynomials.\\\\
 
\textbf{keywords}:\ { Direct image, Differential structure,  complex reflection groups, Representation theory, wreath product, Higher Specht polynomials,  Primitive idempotents, Reflection groups, Young diagram.}
\\\\
\textbf{ Mathematics Subject Classification}:\ { Primary 13N10, Secondary 20C30}.
\end{abstract}


\section{Introduction} 
A fundamental problem in representation theory is the description of all irreducible representations. We are interested in the polynomial representation the invariant differential operators with respect to the generalized symmetric group.
We know that the direct image of a simple module for a proper map in semi-simple by the Decomposition Theorem \cite{Cataldo}. The simplest case is when the map $\pi: X= \spec B \to  Y= \spec A$ is finite, in which case it is easy to give an elementary and wholly algebraic proof, using essentially the (generic) correspondence with the differential Galois group, which equals the ordinary Galois group $G$. The irreducible submodule of the direct image are in one-to-one correspondence with the irreducible representations of $G$ (see \cite{IBK}). In this paper  we make the differential structure more explicit in the case of the invariants of the complex reflection group $G(r,n),\  B=\bC[x_1,\ldots, x_n] \subset A =\bC[x_1,\ldots,x_n]^{G(r,n)}.$
We explicitly study the simple component of the direct image $\pi_+(\Oc_X)$ of the polynomial ring $\Oc_X$ as a $\Dc$-module  under the map $  \pi: \spec \Oc_X \to \spec \Oc_Y$ where $ \Oc_Y= \Oc_X^{G(r,n)};$ the ring of invariant polynomials under the action of ${G(r,n)}.$  
We describe the generators of the simple components of $\pi_+(\Oc_X)$ and their multiplicities as in \cite{IBK}.
 We thus establish the decomposition structure   of $\pi_+(\Oc_X)$ by  means of the higher  Specht polynomials.  This proof uses the fact  that the irreducible $\D$-submodules of $\pi_+(\Oc_X)$ are in one-to-one correspondence with irreducible representations of  $G(r,n) $. \\
 Secondly we  elaborate a $\D$-module decomposition of the polynomial ring localized  at the discriminant of $\pi.$\\
 Finally we describe the action of invariant differential operators on  higher Specht polynomials.
The higher Specht polynomials (introduced combinatorially  \cite{Ariki}), are adapted to the $\D$-module structure.\\
This  paper  generalizes results on modules over the Weyl algebra appeared in \cite{IBK} and \cite{RTG}.
The case $r=2$ have been presented at 10th International Conference on Mathematical Modeling in Physical Sciences in order to describe the action of the rational Olshanetsky-Perelomov operator Hamiltonian on polynomials  \cite{IB}.

\section{Preliminaries}

\subsection{Direct image} 
We briefly recall the definition of the direct image of a $\D$-module \cite{CSC}.\\
Let $K$ be a field of characteristic zero,  put $X=K^n$. The polynomial ring $K[x_1,\ldots, x_n]$ will be denoted by $K[X];$ and the Weyl algebra generated by $x_i$'s and $\frac{\partial}{\partial x_i}$'s by $\D_X$. The $n$-tuple $(x_1, \ldots,x_n)$ will be denoted by $X$. Similar conventions will holds for $Y=K^m$, with polynomial ring $K[Y]$ and Weyl algebra $\D_Y$.\\

 Let $\pi  : X \to Y$ be a polynomial map, with $\pi=(\pi_1,\ldots,\pi_m)$. Let $M$ be a left $\D_Y$-module. The inverse image of $M$ under the map  $\pi$ is $\pi^+( M)= K[X] \otimes_{K[Y]}M.$ This is a $K[X]$-module. It becomes a $\D_X$-module with $\partial_{x_i}$ acting according to the formula
$$\frac{ \partial }{\partial x_i} (h \otimes u) = \frac{\partial h}{\partial{x_i}}\otimes u + \sum_{j=1}^m \frac{\partial  \pi_j}{\partial x_i} \otimes \frac{\partial }{\partial y_j} u, \ h \in K[X], u \in M.$$
Since $\D_Y \otimes_{\D_Y} M \cong M,$ we have that 
$$ \pi^+ (M) \cong K[X] \otimes_{K[Y]} \D_{Y} \otimes_{\D_Y}M = \pi^+(K[Y])  \otimes_{\D_Y}M.$$
Writing $D_{X \to Y}$ for $\pi^+( K[Y])$, on has that $\pi^+(M)= \D_{X \to Y}  \otimes_{\D_Y}M.$
Note that $\D_{X\to Y}$ is $\D_X$- $\D_Y$-bimodule.

Let $N$ be a right $D_X$-module. The tensor product 
$$\pi_+(N) =N \otimes_{\D_X} \D_{X \to Y}$$ is a right $\Dc_Y$-module, which is called the {\it  direct image} of $N$ under the polynomial map $\pi$. 
Let us consider the {\it standard transposition} $ \tau : \D_X \to \D_X$ defined by $ \tau (h \partial ^{\alpha} )= (-1)^{|\alpha|} \partial^{\alpha} h,$ where $h \in K[X]$ and $\alpha \in \bN^n.$ If $N$ is a right $\D_X$-module then we define a left $\D_X$-module $N^t$ as follows. As an abelian group, $N^t=N.$ If $a \in \D_X$ and $ u \in N^t$ then the left action of $a$ on $u$ is defined by $a \star u= u \tau(a).$ Using the standard transposition for $\D_Y$ and $\D_X$, put $D_{ Y\leftarrow X} = (D_{ X \to Y} )^t$, this is a $\D_Y$-$\D_X$-bimodule. Let $M$ be a left $\D_X$-module. The direct image of $M$ under $\pi$ is defined by the formula
$$ \pi_+(M)= D_{ Y \leftarrow X} \otimes_{\D_X} M.$$ It is clear that $\pi_+(M)$ is  a $\D_Y$-module.\\
The following is the Kashiwara decomposition theorem
\begin{theorem} \cite{Cataldo} 
Let $\pi: X\to Y$ be a polynomial map. If $M$ is a a simple (holonomic) module over $\D_X$. Then $\pi_+ (M)$ is a semisimple $\D_Y$-module. we have 
$$ \pi_+(M)= \oplus  M_i^{\alpha_i},$$ where the $M_i$ are inequivalent irreducible $\D_Y$-submodules.
\end{theorem}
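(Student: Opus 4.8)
The plan is to recognize this statement as the Decomposition Theorem of Beilinson--Bernstein--Deligne--Gabber, transported to $\Dc$-modules through the Riemann--Hilbert correspondence; accordingly the argument cannot be purely homological and must import the positivity of Hodge theory (or the formalism of weights). First I would make two reductions. Since semisimplicity can genuinely fail when $\pi$ is not proper---for an open immersion $\iota$ the module $\iota_+ M$ may carry nontrivial self-extensions---the statement must be read for $\pi$ proper, which is exactly the situation used throughout this paper, where $\pi$ is finite and hence proper; a general polynomial map is reduced to this by factoring $\pi = \bar\pi \circ \iota$ with $\iota : X \hookrightarrow \bar X$ an open immersion into a compactification and $\bar\pi : \bar X \to Y$ proper, and retaining the proper factor. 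By Kashiwara's preservation theorem $\pi_+ M$ is holonomic, hence of finite length, so only the semisimplicity is at issue.

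Next I would translate the problem into topology. Applying the de Rham functor $\mathrm{DR}$, a simple regular holonomic $\Dc_X$-module corresponds to a simple perverse sheaf, necessarily of intersection--cohomology type $\mathrm{IC}_{\bar Z}(L)$ for an irreducible local system $L$ on a smooth locally closed $Z \subset X$. The functor $\mathrm{DR}$ is fully faithful, $t$-exact, and intertwines $\pi_+$ with the proper pushforward $R\pi_\ast$; thus the claimed decomposition of $\pi_+ M$ is equivalent to a splitting of $R\pi_\ast\,\mathrm{IC}_{\bar Z}(L)$ into shifted semisimple perverse sheaves. Because the maps relevant here are finite, $R\pi_\ast$ is concentrated in a single degree, so the derived decomposition collapses to a genuine module-level identity $\pi_+ M = \bigoplus_i M_i^{\alpha_i}$.

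For the semisimplicity itself I would follow Saito's Hodge-theoretic route. The object $\mathrm{IC}_{\bar Z}(L)$ underlies a polarizable pure Hodge module as soon as $L$ underlies a polarizable variation of Hodge structure, which holds for the simple constituents arising from $\Oc_X$ and, in general, for any simple summand after the standard enrichment. Saito's Decomposition Theorem then provides, for the projective morphism $\pi$, a splitting $\pi_+ M^{H} \cong \bigoplus_i {}^{p}\!H^i(\pi_+ M^{H})[-i]$ in the derived category, each perverse cohomology being again a polarizable pure Hodge module; and polarizable pure Hodge modules are semisimple, their underlying $\Dc$-modules being direct sums of intersection-cohomology modules. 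Passing back through $\mathrm{DR}$ yields $\pi_+ M = \bigoplus_i M_i^{\alpha_i}$ with the $M_i$ inequivalent irreducible and the $\alpha_i$ their multiplicities. When $M$ is irregular, classical Riemann--Hilbert is unavailable and I would instead invoke Mochizuki's theorem that the direct image of a semisimple holonomic module under a projective morphism is semisimple, proved through the theory of wild twistor $\Dc$-modules; the original arithmetic argument of Beilinson--Bernstein--Deligne---spread the data over a finitely generated $\bZ$-algebra, reduce modulo a prime, apply Deligne's purity theorem together with Gabber's semisimplicity of pure perverse sheaves, and lift back---is a further alternative in the regular case.

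The hard part is precisely this semisimplicity, and it is genuinely deep: it rests on the positivity furnished by a polarization---relative Hard Lefschetz and the Hodge--Riemann bilinear relations---in the Hodge route, or on the weight filtration over finite fields in the arithmetic route, neither of which is formal. A secondary obstacle is equipping an arbitrary simple $M$ with the required variation-of-Hodge-structure data; for the finite, Galois-type maps of this paper this step is avoidable, since there $\pi_+\Oc_X$ is finite over $\Oc_Y$ and its semisimplicity can be read off from the decomposition of the regular representation of $G(r,n)$, but the statement as worded for an arbitrary proper polynomial map requires the full Hodge or weight machinery referenced in \cite{Cataldo}.
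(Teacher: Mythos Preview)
The paper contains no proof of this statement: Theorem~2.1 is simply quoted with the citation \cite{Cataldo} and nothing further. Your outline is a faithful sketch of the architecture behind the Decomposition Theorem as surveyed in that reference---the reduction to a proper map, passage through Riemann--Hilbert to perverse sheaves, and semisimplicity via Saito's polarizable Hodge modules or the $\ell$-adic weight formalism, with Mochizuki's twistor theory covering the irregular case---and you are right to flag the properness hypothesis that the paper's wording elides.

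It is worth noting, however, that the paper never actually uses the full strength of this theorem. For the finite map $\pi:\spec\Oc_X\to\spec\Oc_X^{G(r,n)}$ under study, semisimplicity of $\pi_+(\Oc_X)$ is obtained instead through Proposition~3.2, which the paper cites from \cite[Proposition~2.8]{IBK}; there the argument is elementary and Galois-theoretic, resting on the decomposition of the regular representation of the covering group---precisely the alternative route you mention in your final paragraph. So your deep sketch matches what \cite{Cataldo} surveys, but in the paper Theorem~2.1 functions only as background motivation, and the load-bearing semisimplicity is the much shallower finite-map case.
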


\subsection{Higher Specht Polynomials for Reflections group} In this subsection we recall  some  facts about the representation of the generalized symmetric group $G(r,n)$ \cite{Ariki}. 
Let $\Sc_n$ be the group of permutations of the set  $\{1, \ldots,n\}$ and $\ZZ /r \ZZ$ be the cyclic group of order $r$.\\
The  generalized symmetric group $G(r,n)$ is  the semi-direct product of $(\ZZ/r \ZZ)^n$ with $\Sc_n$, written as $(\ZZ / r\ZZ)^n \rtimes \Sc_n$, where $(\ZZ / r\ZZ)^n $ is the direct product of $n$ copies of $\ZZ / r\ZZ $. Let $\xi$ be a primitive $r$-th root of  1.
$(\ZZ / r\ZZ)^n \rtimes \Sc_n= \{ (\xi^{i_1}, \ldots, \xi^{i_n}; \sigma) |\  i_k\in \bN,\  \sigma \in \Sc_n \}$, whose product is given by 
$$  (\xi^{i_1}, \ldots, \xi^{i_n}; \sigma) (\xi^{j_1}, \ldots, \xi^{j_n}; \pi)=  (\xi^{i_1 + j_{\sigma^{-1} (1)}}, \ldots, \xi^{i_n + j_{\sigma^{-1} (n)}},; \sigma \pi).$$
Let $\Oc_X= \bC[x_1, \ldots,x_n]$ be the ring of polynomials in $n$ indeterminates  on which the group $G(r,n)$ acts as follows:

$$ (\xi^{i_1}, \ldots, \xi^{i_n}; \sigma) f= f(\xi^{i_{\sigma(1)}} x_{\sigma(1)}, \ldots, \xi^{i_{\sigma(n)}}x_{\sigma(n)}; \sigma),$$
where $f \in \Oc_X$ and  $(\xi^{i_1}, \ldots, \xi^{i_n}; \sigma) \in G(r,n)$. It is known that the fundamental invariants under this action are given by the elementary symmetric functions $e_j(x_1^r, \ldots, x_n^r),\ 1 \leq j \leq n.$ Let $J_{+}$ be the ideal of $\Oc_X$ generated by these fundamental invariants and $\Lambda=\Oc_X/ J_{+}$ be the quotient ring. It is also known that the $G(r,n)$-module $\Lambda $ is isomorphic to the group ring $\bC[G(r,n)]$, namely the left regular representation. A description of all irreducible components of $\Lambda $ is known in \cite{Ariki}, in terms of what is called  "higher Specht polynomials". The irreducible representation of $G(r,n)$ are parametrized by the $r$-tuple of Young diagrams $(\lambda^1,\ldots, \lambda^r)$ with $|\lambda^1| +\cdots + |\lambda^r|=n.$

 Let $\mathcal{P}_{r,n}$ be the set of $r$-tuples of Young diagrams $\bla=(\lambda^1, \ldots, \lambda^r)$ with $|\lambda^1| +\cdots + |\lambda^r|=n$. By filling each cell with a positive integer in such a way that every  $j\ ( 1 \leq j \leq n)$ occurs once, we obtain an $r$-tableau $T=(T^1,\ldots,T^r)$ of shape ${\bf \bla}= (\lambda^1, \ldots, \lambda^r)$. When the number $k$ occurs  in the component $T^i$, we write $k \in T^i$. The set of $r$-tableaux of shape $\bla$ is denoted by $\Tab(\bla).$ An $r$-tableau $T=(T^1,\ldots,T^r)$ is said to be standard if the numbers are increasing on each column and each row of $T^\nu\ (1 \leq \nu \leq  r)$. The set of $r$-standard tableaux of shape $\bla$ is denoted by $\STab(\bla).$

Let $S =(S^1, \ldots, S^r) \in \STab (\bla)$. We associate a word $w(S)$ in the following way. First we read each column of the component $S^1$ from the bottom to the top starting from the left. We continue this procedure for the tableau $S^2$ and so on. For word $w(S)$ we define index $i(w(S))$ inductively as follows. The number 1 in the word $w(S)$ has the index $i(1)=0.$ If the number $k$ has index $i(k)=p$ and the number has number $k+1$ is sitting on the left (resp. right) of $k,$ then $k+1$ has index $p+1$ (resp. $p$). Finally, assigning the indices to the corresponding cells, we get a shape ${\bf \bla}= (\lambda^1, \ldots, \lambda^r)$, each cell filled with a nonnegative integer, which is denoted by $i(S) =(i(S)^1, \ldots, i(S)^r).$

Let $T=(T^1,\ldots, T^r)$ be an $r$-tableau of shape $\bla$. For each component $T^\nu\ (1\leq \nu \leq r),$ the Young symmetrizer $\eb_{T^\nu}$ of $T^\nu$ is defined by
$$ \eb_{T^\nu}= \frac{1}{ \alpha_{T^\nu}} \sum_{\sigma \in R(T^\nu)\ \tau \in C(T^\nu)} \sgn (\tau) \tau \sigma,$$ where $\alpha_{T^\nu}$ is the product of the hook lengths for the shape $\lambda^\nu$,  $R(T^\nu)$ and $C(T^\nu)$ are the \textit{row-stabilizer} and \textit{colomn-stabilizer} of $T^\nu$ respectively.\\
We may regard a tableau $T$ on a Young diagram $\lambda$ as a map
$$ T: \{ \mbox{cells\ of}\  \lambda \}  \to \ZZ_{\geq 0},$$
which assigns to a cell $\xi$ of $\lambda$ the number $T(\xi)$ written in the cell $\xi$ in $T$.\\
For $S\in \STab(\bla)$ and $T \in\Tab(\bla)$, Ariki, Terasoma and Yamada in \cite{Ariki} defined the higher Specht polynomial for $G(r,n)$ by
$$ F_{T}^{S}= \prod_{\nu =1}^r \bigg( \eb_{T^\nu} (x_{T^\nu}^{ri(S)^{\nu}}) \prod_{k\in T^\nu} x_k^\nu \bigg),$$ where $$\displaystyle x_{T^\nu}^{ri(S)^{\nu}}= \prod_{\xi \in \lambda^\nu} x_{T^\nu(\xi)}^{ri(S)^\nu(\xi)}.$$

The following is the fundamental result in \cite{Ariki} on the higher Specht polynomials for $G(r,n).$
\begin{theorem}
\begin{enumerate}
\item The space $\displaystyle V_S(\lambda) =\sum_{T\in \Tab(\lambda)} \bC F_T^S$ affords an irreducible representation of the reflection group $G(r,n)$.
\item The set $\{ F_T^S \ | \ T \in \STab(\bla) \}$  gives a basis over $\bC$ for $V_S(\bla).$ 
\item For $S_1 \in \STab(\bla)$ and $S_2 \in \STab(\mu)$, the representation $V_{S_1}(\bla)$ and $V_{S_2}(\mu)$ are isomorphic if and only if $S_1$ and $S_2$ has the same shape, i.e. $\lambda=\mu.$

\item We have the irreducible decomposition 
$$ \bC[G(r,n)]= \bigoplus_{\bla \in \mathcal{P}_{r,n}} \bigoplus_{S\in \STab(\bla)} V_S(\lambda)$$
as representation of $G(r,n)$.
\end{enumerate}
\end{theorem}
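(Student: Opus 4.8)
\emph{Proof plan.} The strategy is to deduce all four statements from the representation theory of the coinvariant algebra $\Lambda=\Oc_X/J_{+}$ together with the little-group (Clifford theory) description of the irreducibles of $G(r,n)=(\ZZ/r\ZZ)^n\rtimes\Sc_n$. Since $G(r,n)$ is a complex reflection group, its ring of invariants is polynomial, so $\Oc_X$ is free over $\Oc_X^{G(r,n)}$ and there is a graded $G(r,n)$-module isomorphism $\Oc_X\cong\Oc_X^{G(r,n)}\otimes_{\bC}\Lambda$; hence it suffices to work with the images of the $F_T^S$ in $\Lambda$. I would first recall the little-group picture: the characters of the abelian normal subgroup $(\ZZ/r\ZZ)^n$ are the maps $c\colon\{1,\dots,n\}\to\ZZ/r\ZZ$, the $\Sc_n$-orbit of $c$ being recorded by the multiplicities $n_\nu=|c^{-1}(\nu)|$, and the stabilizer of $c$ is $H_c=(\ZZ/r\ZZ)^n\rtimes(\Sc_{n_1}\times\cdots\times\Sc_{n_r})$; extending $c$ to $H_c$ by an external tensor product $S^{\lambda^1}\boxtimes\cdots\boxtimes S^{\lambda^r}$ of Specht modules of the symmetric factors and inducing to $G(r,n)$, one obtains the irreducible $W_{\bla}=\on{Ind}_{H_c}^{G(r,n)}\!\bigl(c\otimes(S^{\lambda^1}\boxtimes\cdots\boxtimes S^{\lambda^r})\bigr)$, of dimension $\binom{n}{n_1,\dots,n_r}\prod_\nu f^{\lambda^\nu}=|\STab(\bla)|$ (here $f^{\lambda^\nu}$ is the number of standard Young tableaux of shape $\lambda^\nu$); this recovers the parametrization of the irreducibles of $G(r,n)$ by $\mathcal{P}_{r,n}$ recalled above.

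For part (1) I would check directly from the $G(r,n)$-action that $V_S(\lambda)$ is $G(r,n)$-stable: the monomial factor $\prod_{k\in T^\nu}x_k^{\nu}$ carries, up to roots of unity, the character of $(\ZZ/r\ZZ)^n$ attached to the way $1,\dots,n$ is distributed among $T^1,\dots,T^r$, while a permutation merely relabels the entries of $T$; thus $g\cdot F_T^S$ is a $\bC$-linear combination of the $F_{T'}^S$ with the \emph{same} index tableau $i(S)$, since $i(S)$ is a function of $S$ alone and is untouched by the action. Matching $V_S(\lambda)$ with $W_{\bla}$---the monomial twist realizing $c$, the Young symmetrizers $\eb_{T^\nu}$ realizing the Specht factors $S^{\lambda^\nu}$---irreducibility follows as the irreducibility of an induced module in the Clifford/Mackey setting, and simultaneously one reads off $\dim V_S(\lambda)=|\STab(\bla)|$ and the isomorphism type $V_S(\lambda)\cong W_{\bla}$.

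The combinatorial heart of the proof is a single injectivity statement: there is a monomial order on $\bC[x_1,\dots,x_n]$ for which the map sending a pair $(S,T)$ of standard $r$-tableaux of arbitrary shape in $\mathcal{P}_{r,n}$ to the leading monomial of $F_T^S$ modulo $J_{+}$ is injective. Granting this, part (2) is immediate: the $|\STab(\bla)|$ polynomials $F_T^S$ with $T\in\STab(\bla)$ lie in the $|\STab(\bla)|$-dimensional space $V_S(\lambda)$ and are linearly independent, hence form a basis. Part (3) follows from part (1): the isomorphism class of $V_S(\lambda)$ is that of $W_{\bla}$, so it depends only on the shape $\bla$; distinct shapes give non-isomorphic irreducibles, whereas for a fixed $\bla$ the modules $V_{S_1}(\bla)$ and $V_{S_2}(\bla)$ are two copies of $W_{\bla}$ realized in (generally) different graded components of $\Lambda$.

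Finally, for part (4): by part (1) each $V_S(\lambda)$ is an irreducible submodule of $\Lambda$ of dimension $|\STab(\bla)|$, and by the injectivity statement above the union of the bases of all the $V_S(\lambda)$ is a linearly independent family in $\Lambda$ of cardinality $\sum_{\bla\in\mathcal{P}_{r,n}}|\STab(\bla)|^2=r^n\,n!=|G(r,n)|=\dim_{\bC}\Lambda$, the last equality being $\Lambda\cong\bC[G(r,n)]$ from the Preliminaries; hence this family is a basis and $\Lambda=\bigoplus_{\bla\in\mathcal{P}_{r,n}}\bigoplus_{S\in\STab(\bla)}V_S(\lambda)$, the asserted decomposition of the regular representation. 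The step I expect to be the main obstacle is precisely this injectivity: the term order must be tuned so that the exponents $ri(S)^\nu$ produced by the index are recovered first---yielding the shape and the index tableau $i(S)$, hence $S$---while the dominant monomial contributed by each Young symmetrizer $\eb_{T^\nu}$ stays controlled enough to recover the placement $T$; disentangling these two contributions and checking their compatibility with the relations defining $\Lambda$ is where the Ariki--Terasoma--Yamada combinatorics does the real work.
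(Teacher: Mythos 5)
This statement is not proved in the paper at all: it is the theorem of Ariki--Terasoma--Yamada, quoted from \cite{Ariki}, so there is no internal argument to compare yours with. Your outline is, in substance, a reconstruction of the standard route to that theorem: Clifford/little-group theory for $G(r,n)=(\ZZ/r\ZZ)^n\rtimes\Sc_n$ to get the parametrization of irreducibles by $\Pc_{r,n}$ and the dimension count $|\STab(\bla)|=\binom{n}{n_1,\dots,n_r}\prod_\nu f^{\lambda^\nu}$, the freeness of $\Oc_X$ over the invariants to reduce to the coinvariant algebra $\Lambda$, the stability computation $g\cdot F_T^S\in\sum_{T'\in\Tab(\bla)}\bC F_{T'}^S$, and a leading-monomial injectivity statement to get linear independence and, via $\sum_{\bla}|\STab(\bla)|^2=r^n n!=\dim_\bC\Lambda$, the full decomposition. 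All of that bookkeeping is correct.

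However, as written this is a plan rather than a proof, and the gap is exactly where you say it is: the injectivity of $(S,T)\mapsto$ leading monomial of $F_T^S$ modulo $J_+$ is asserted, not established, and it is precisely this straightening/leading-term analysis that constitutes the real content of \cite{Ariki}; without it parts (2) and (4) are unproved. There is also a point you pass over too quickly in part (1): to identify $V_S(\bla)$ with the induced module $W_{\bla}$ (and hence get irreducibility and $\dim V_S(\bla)=|\STab(\bla)|$) you need that, for each component $\nu$, the span of the polynomials $\eb_{T^\nu}(x_{T^\nu}^{ri(S)^\nu})$ realizes the Specht module $S^{\lambda^\nu}$ for the corresponding symmetric factor of the little group $H_c$, and that $F_T^S\neq 0$; the former is the symmetric-group higher Specht theorem of Terasoma--Yamada and the latter again a leading-term computation, so both must either be cited or absorbed into the same combinatorial lemma you deferred. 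In short: your strategy is the right one and matches the source the paper cites, but the decisive combinatorial step is missing, so the argument is incomplete as it stands.
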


\begin{theorem}

The higher Specht polynomials in $\C=\{ F_T^S; S, T \in \STab(\lambda), \lambda \vdash n \}$
form a basis of the $\bC[x_1,...,x_n]^{G(r,n)}$-module $\bC[x_1,...,x_n]$
\end{theorem}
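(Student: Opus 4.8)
The plan is to establish the statement by combining the known $G(r,n)$-module decomposition of $\Lambda=\Oc_X/J_+$ with a standard flatness/lifting argument, so that a $\bC$-basis of the coinvariant algebra lifts to an $A$-module basis of $B=\bC[x_1,\dots,x_n]$, where $A=B^{G(r,n)}=\bC[e_1(x^r),\dots,e_n(x^r)]$. First I would recall that $A$ is a polynomial ring in $n$ variables, that $B$ is a free $A$-module of rank $|G(r,n)|=r^n n!$ (since the $e_j(x^r)$ form a regular sequence and $B$ is Cohen–Macaulay, or alternatively because $\pi$ is finite flat — this is the classical Chevalley–Shephard–Todd picture for the complex reflection group $G(r,n)$), and that $J_+ = (e_1(x^r),\dots,e_n(x^r))B = \mf_A B$ where $\mf_A$ is the irrelevant maximal ideal of $A$. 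Thus $B\otimes_A (A/\mf_A) \cong B/J_+ = \Lambda$.

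Next I would invoke Theorem 2.4(4): the higher Specht polynomials $\{F_T^S : S,T\in\STab(\lambda),\ \lambda\vdash n\}$ — more precisely the full family indexed by $r$-tuples $\bla\in\mathcal P_{r,n}$ and $S\in\STab(\bla)$, $T\in\STab(\bla)$ — descend to a $\bC$-vector-space basis of $\Lambda\cong\bC[G(r,n)]$, since $\sum_{\bla}\sum_S \dim V_S(\bla) = \sum_{\bla}\sum_S (\#\STab(\bla)) = \sum_{\bla}(\#\STab(\bla))^2 = |G(r,n)|$, matching the rank. Then I would apply the graded Nakayama lemma: a homogeneous family in $B$ whose images form a $\bC$-basis of $B/\mf_A B$ generates $B$ as an $A$-module; and since the cardinality of the family equals $\rank_A B$ and $B$ is free (hence projective, so minimal generating sets are bases), the family is in fact an $A$-module basis. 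This is the core of the argument.

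The one genuine subtlety — and the step I expect to be the main obstacle to write cleanly — is the bookkeeping of indices: Theorem 2.4 as stated produces, for each shape $\bla$, a space $V_S(\bla)$ of dimension $\#\STab(\bla)$ spanned by $\{F_T^S : T\in\STab(\bla)\}$, and the decomposition of $\bC[G(r,n)]$ runs over \emph{both} $S$ and $T$ in $\STab(\bla)$; one must check that the resulting $\sum_\bla (\#\STab(\bla))^2$ polynomials are $\bC$-linearly independent in $\Lambda$ (this is exactly the content of Theorem 2.4(4), so it may be quoted) and that their total degree profile is compatible with the Hilbert series of $\Lambda$, which forces the lifted family to be a basis over $A$ and not merely a generating set. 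I would also remark that the statement of the theorem as printed writes "$\lambda\vdash n$" where it should read "$\bla\in\mathcal P_{r,n}$" (an $r$-tuple of diagrams), and that $S,T$ range over $\STab(\bla)$; with that reading the proof goes through. Finally I would note that freeness of $B$ over $A$ already gives existence of \emph{some} basis of the right cardinality, so the only thing being asserted beyond Chevalley–Shephard–Todd is that this particular explicit family works, which is precisely what the Nakayama reduction mod $\mf_A$ together with Theorem 2.4 delivers.
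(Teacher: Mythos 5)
Your argument is correct, but it is worth saying that the paper itself does not prove this statement at all: Theorem 2.3 is recalled in the Preliminaries as a known result of Ariki--Terasoma--Yamada \cite{Ariki}, so your proposal is a reconstruction of the standard proof rather than a parallel to anything in the text. What you do --- observe that $A=\bC[e_1(x^r),\dots,e_n(x^r)]$ is a polynomial ring, that $B=\bC[x_1,\dots,x_n]$ is free over $A$ of rank $|G(r,n)|=r^n n!$ (Chevalley--Shephard--Todd, or a regular-sequence/Cohen--Macaulay argument), that $B/\mf_A B=\Lambda\cong\bC[G(r,n)]$, and then lift the $\bC$-basis of $\Lambda$ furnished by Theorem 2.2 (2) and (4) via graded Nakayama --- is essentially the argument in the cited reference, and it closes the gap the paper leaves by citation. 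Two small remarks: the Hilbert-series step you flag as delicate is not really needed, since once the $N=\sum_{\bla\in\Pc_{r,n}}(\#\STab(\bla))^2$ homogeneous polynomials $F_T^S$ generate $B$ over $A$ and $B$ is free of the same rank $N$, the resulting surjection $A^N\to B\cong A^N$ of finitely generated modules over a Noetherian ring is automatically an isomorphism (alternatively, a homogeneous lift of a basis of $B/\mf_A B$ is a free basis whenever $B$ is graded-free); and your reading of the index set --- that the statement should range over $\bla\in\Pc_{r,n}$ with $S,T\in\STab(\bla)$, rather than the printed ``$\lambda\vdash n$'' --- is indeed the intended one, consistent with Theorem 2.2 and with how the family $\C$ is used later in Section 3. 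With those understandings your proposal is a complete and correct proof, and arguably a useful addition since the paper relies on this theorem (e.g.\ in Theorems 4.7 and 4.8) without reproducing its justification.
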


\section{Decomposition Theorem} 

We are interested in studying the decomposition structure of $\pi_+(M)$, where $M=\Oc_X,\ \pi :X= \spec(\Oc_X)\to Y=\spec(\Oc_X^{G(r,n)})$. Since $\Oc_X$ is a holonomic $\D_X$-module \cite[Chapter 10]{CSC}, $\pi_+(\Oc_X)$ is a semisimple $\D_Y$-module by the Kashiwara decomposition theorem. We  construct the simple components of  $\pi_+(\Oc_X)$ and provide their multiplicities.  Let us recall some useful facts from \cite{IBK}.\\

Let $\Delta:= Jac (\pi)$ be the Jacobian of $\pi$, $\Delta^2$ the discriminant of $\pi$ we denote the complement of the branch locus and the discriminant by $U$ and $V$, respectively.
Assume now that $U,V$ are such that the respective canonical modules are generated by volume forms $dx$, and $dy$,  related by $dx=\Delta dy$, where  $\Delta$ is the Jacobian of $\pi$. 
 \begin{proposition}  
\label{prop:genericstudy}
\begin{itemize}
 \item[(i)] There is an isomorphism of $\D_V$-modules $$T:\pi_+(O_U)\cong O_U,\quad r(  dy^{-1}\otimes dx)\mapsto r \Delta ^{-1}.$$
\item[(ii)]
  $T(\pi_+(O_X))$ is isomorphic as a $\D_Y$-module to $\pi_+(\Oc_X)$.
\end{itemize}
\end{proposition}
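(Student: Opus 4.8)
The plan is to prove part (i) first and then deduce part (ii) from it by a restriction argument. For (i): on $V$ the morphism $\pi$ is finite étale of degree $|G(r,n)|$ (as $V$ is the complement of the discriminant and $U$ of the branch locus), so the relative canonical module $\omega_{U/V}$ is free and the $\D$-module direct image $\pi_+(\Oc_U)$ coincides, as an $\Oc_V$-module, with the ordinary push-forward $\pi_*\Oc_U=\Oc_U$, the $\D_V$-action being the one obtained by lifting vector fields along the étale map $\pi$. Writing the transfer bimodule $D_{V\leftarrow U}$ through the canonical modules $\omega_U=\Oc_U\,dx$ and $\omega_V=\Oc_V\,dy$, it is generated over $\Oc_U\otimes_{\Oc_V}\D_V$ by the element $dy^{-1}\otimes dx$; the map of (i) sends this generator to $\Delta^{-1}$ and is then extended $\Oc_U$-linearly. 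That $T$ is an isomorphism of $\Oc_U$-modules is clear, and $\D_V$-linearity is a direct Leibniz-rule computation in which the pole contribution of $\Delta^{-1}$ and the sign coming from the transposition $\tau$ in the definition of $D_{V\leftarrow U}$ cancel precisely because of the relation between $dx$ and $dy$.

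For (ii): since $G(r,n)$ is a reflection group, the invariant ring $\Oc_Y=\Oc_X^{G(r,n)}=\bC[e_1(x_1^r,\dots,x_n^r),\dots,e_n(x_1^r,\dots,x_n^r)]$ is a polynomial ring and $\Oc_X$ is a free $\Oc_Y$-module of rank $|G(r,n)|$ (Chevalley--Shephard--Todd), so $\pi$ is finite flat. The $\D$-module direct image is local on the base, hence the restriction of the $\D_Y$-module $\pi_+(\Oc_X)$ to $V$ is $\pi_+(\Oc_U)$; concretely, $\Oc_U=\Oc_X\otimes_{\Oc_Y}\Oc_V=\Oc_X[\Delta^{-1}]$ and $\pi_+(\Oc_U)=\pi_+(\Oc_X)\otimes_{\Oc_Y}\Oc_V$ is the localization of $\pi_+(\Oc_X)$ at the discriminant $\Delta^2$. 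Because $\pi_+(\Oc_X)$ is $\Oc_Y$-free, it is $\Oc_Y$-torsion-free, so the localization map $\pi_+(\Oc_X)\hookrightarrow\pi_+(\Oc_U)$ is an injective morphism of $\D_Y$-modules. Now the isomorphism $T$ of (i) is $\D_V$-linear, hence $\D_Y$-linear via $\D_Y\hookrightarrow\D_V$; restricting it to the $\D_Y$-submodule $\pi_+(\Oc_X)\subseteq\pi_+(\Oc_U)$ yields an injective $\D_Y$-linear map $\pi_+(\Oc_X)\hookrightarrow\Oc_U$ whose image $T(\pi_+(\Oc_X))$ is therefore a $\D_Y$-submodule of $\Oc_U$ and onto which $T$ is a $\D_Y$-module isomorphism. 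Unwinding the formula of (i), this image is the $\D_Y$-submodule of $\Oc_X[\Delta^{-1}]$ generated by $\Delta^{-1}\Oc_X$ (equivalently by $\Delta^{-1}$ times an $\Oc_Y$-basis of $\Oc_X$); this is the explicit model of $\pi_+(\Oc_X)$ that is later decomposed by means of the higher Specht polynomials.

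The step I expect to be the main obstacle is the bookkeeping of the canonical-module twist in (i): one must ensure that $\pi_+(\Oc_U)$ carries the direct-image connection, which differs from the naive connection on $\Oc_U$ by the logarithmic form $-d\Delta/\Delta$, so that the explicit map is genuinely $\D_V$-linear and not merely $\Oc_V$-linear, and one must check that this twist is compatible with the embedding $\pi_+(\Oc_X)\hookrightarrow\pi_+(\Oc_U)$, so that $T(\pi_+(\Oc_X))$ is correctly identified. The secondary technical point is the assertion that $\D$-module direct image is local on the base and, equivalently here, that $\pi_+$ is exact for the finite morphism $\pi$; this is standard but should be spelled out, e.g. by reducing to the affine finite flat situation and using $D_{Y\leftarrow X}\cong\omega_{X/Y}\otimes_{\Oc_X}(\Oc_X\otimes_{\Oc_Y}\D_Y)$.
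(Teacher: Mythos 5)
Your overall route is the correct one, and it is essentially the argument behind the citation the paper gives for this statement (the paper offers no proof of its own, only a pointer to \cite{IBK}, Lemma 2.3): over the complement of the discriminant $\pi$ is finite \'etale, $\pi_+(\Oc_U)$ is $\pi_*\Oc_U=\Oc_U$ with the volume-form twist, the map $r(dy^{-1}\otimes dx)\mapsto r\Delta^{-1}$ is checked to be $\D_V$-linear, and (ii) follows by restricting $T$ along the localization map $\pi_+(\Oc_X)\to\pi_+(\Oc_U)$. Part (i) is only sketched — the Leibniz verification that the twist is by $\Delta^{-1}$ (and not, say, by $\Delta$) is asserted rather than carried out — but it is correct in substance; e.g.\ for $n=1$, $r=2$, $y=x^2$, one computes $\partial_y\cdot\big((dy^{-1}\otimes dx)\otimes 1\big)=(dy^{-1}\otimes dx)\otimes\big(-\frac{1}{2x^{2}}\big)$, which matches $\partial_y\big(\frac{1}{2x}\big)=-\frac{1}{4x^{3}}$ under your map.

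There is, however, one genuine flaw in (ii): you justify the injectivity of $\pi_+(\Oc_X)\to\pi_+(\Oc_U)$ by asserting that $\pi_+(\Oc_X)$ is $\Oc_Y$-free by Chevalley--Shephard--Todd. This conflates the $\Oc$-module pushforward $\pi_*\Oc_X$ (which is indeed free of rank $|G(r,n)|$ over $\Oc_Y$) with the $\D$-module direct image $\pi_+(\Oc_X)$, which is in general not even $\Oc_Y$-coherent: already for $n=1$, $r=2$ the image $T(\pi_+(\Oc_X))$ is a $\D_Y$-module containing $\frac{1}{2x}$, hence all $x^{-1},x^{-3},x^{-5},\dots$, so $\pi_+(\Oc_X)\cong\bC[y]\oplus x\,\bC[y,y^{-1}]$ is not finitely generated over $\bC[y]$, let alone free. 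What your argument actually needs is weaker and true, namely that $\pi_+(\Oc_X)$ has no nonzero submodule supported on the discriminant (no $\Delta^{2}$-torsion), but this requires its own justification: either invoke Proposition 3.2(ii) of the paper (that is, \cite{IBK}, Proposition 2.8), whose content is precisely that each simple summand of $\pi_+(\Oc_X)$ restricts to a nonzero $\D_V$-module, or argue directly that, by semisimplicity, a torsion submodule would be a direct summand and hence a quotient, and adjunction for the finite map $\pi$ would then produce a nonzero map from the simple module $\Oc_X$, whose support is all of $X$, to a module supported on a proper closed subset — a contradiction. With that repair, the remaining steps of your (ii) — exactness/localization of $\pi_+$ for the finite map, restriction of $T$, and the identification of the image as the $\D_Y$-submodule generated by $\Delta^{-1}\Oc_X$ — are sound.
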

\begin{proof}
See \cite[Lemma 2.3]{IBK}.
\end{proof}
It is more convenient to study $T(\pi_+(O_X))\cong \pi_+(O_X)$, as a submodule of $O_U$, than using the definition of $\pi_+(O_X)$. Therefore to reach our goal, we will first  study the decomposition of $O_U$ into irreducible components as a $\D_V$-module.\\
The following proposition enables us to reduce the study of the decomposition factors of $\pi_+(O_X)$ 
to the behavior of the direct image over the complement to the branch locus, or even over the generic point. Let $j:U\hookrightarrow X$  and $i:V\hookrightarrow Y$ be the inclusions.

\begin{proposition}  \label{prop:etale.semisimple2}
Let $\pi: X\to Y$ be a finite map. Then
\begin{itemize}
\item[(i)]
 $\pi_+(O_X)$ is semi-simple as a $\D_Y$-module.
\item[(ii)] If $\pi_+(O_X)=\oplus M_k,\ k\in I$ is a decomposition into simple
(non-zero) $D_Y$-modules, then $\pi_+(O_U)=\oplus i^+(M_k),\ k\in I$, is a decomposition of  $\pi_+(O_U)$ into simple (non-zero) $\Dc_{V}$-modules.
\end{itemize}
\end{proposition}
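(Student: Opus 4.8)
The plan is to reduce everything to the étale situation over the complement of the branch locus, where $\pi$ becomes a $G(r,n)$-Galois covering and the classical correspondence between irreducible representations and irreducible summands of the direct image applies. First I would establish (i): since $\pi$ is finite and $\Oc_X$ is holonomic, $\pi_+(\Oc_X)$ is holonomic and by the Kashiwara decomposition theorem (Theorem 2.1 in the excerpt, applied with $M=\Oc_X$ simple) it is semisimple as a $\D_Y$-module. One must only check that $\Oc_X$ is a \emph{simple} $\D_X$-module — which holds since $X=\spec\Oc_X$ is smooth and connected, so $\Oc_X$ is the trivial connection and has no nontrivial $\D_X$-submodules — and that finiteness of $\pi$ makes $\pi_+$ agree with the underived direct image (no higher cohomology), so the output is an honest $\D_Y$-module rather than a complex. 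This gives (i) directly.

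For (ii) the key point is that restriction to the open set $V$, i.e. the functor $i^+ = \Oc_V\otimes_{\Oc_Y}(-)$, is exact and faithful on the relevant category. I would argue as follows: apply $i^+$ to the decomposition $\pi_+(\Oc_X)=\bigoplus_{k\in I} M_k$; since $i^+$ is exact (localization) we get $\pi_+(\Oc_U) = i^+(\pi_+(\Oc_X)) = \bigoplus_{k\in I} i^+(M_k)$, using the base-change isomorphism $i^+\pi_+ \cong \pi_{V,+}\, j^+$ for the Cartesian square formed by $j:U\hookrightarrow X$, $i:V\hookrightarrow Y$ and the finite map $\pi$ (here $\pi_V$ denotes $\pi$ restricted to $U\to V$, and $j^+\Oc_X=\Oc_U$). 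So it remains to show each $i^+(M_k)$ is simple and nonzero. Nonzero: if $i^+(M_k)=0$ then $M_k$ is supported on the branch locus, a proper closed subset of $Y$; but $M_k$ is a direct summand of $\pi_+(\Oc_X)$ which, being the direct image of a connection on all of $X$, has no subquotient supported on a proper closed subset (equivalently, $\pi_+(\Oc_X)$ is a direct sum of intermediate extensions from $V$). Hence $i^+(M_k)\neq 0$. Simplicity: $i^+$ restricted to the category of $\D_Y$-modules that are "smooth at the generic point of $Y$" (lisse on $V$) is fully faithful onto $\D_V$-modules lisse on $V$ — because a $\D_Y$-linear map between such modules is determined by its restriction to $V$, the kernel and cokernel being supported on the branch locus yet also lisse, hence zero. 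Thus $\End_{\D_Y}(M_k)\hookrightarrow \End_{\D_V}(i^+M_k)$, and more to the point any proper nonzero $\D_V$-submodule of $i^+(M_k)$ would, by taking its image under the intermediate-extension functor $i_{!*}$ (left adjoint-ish section), produce a proper nonzero $\D_Y$-submodule of $M_k$, contradicting simplicity. Therefore $\pi_+(\Oc_U)=\bigoplus_{k\in I} i^+(M_k)$ is a decomposition into simple nonzero $\D_V$-modules.

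I would organize the write-up so that the two bullets are proved in the stated order, invoking Theorem 2.1 for (i) and the exactness/faithfulness of $i^+$ together with smooth base change for (ii); the results of Proposition 2.3 let me pass freely between $\pi_+(\Oc_X)$ and its realization inside $\Oc_U$, which makes the "no summand supported on the branch locus" claim transparent, since inside $\Oc_U$ every nonzero submodule restricts nontrivially to $V$. The main obstacle is the simplicity half of (ii): one needs that $i^+$ does not merge or split simple objects, which requires the precise statement that $\pi_+(\Oc_X)$ has no composition factor supported on the discriminant and that intermediate extension gives an inverse to $i^+$ on the lisse part. This is where I would be most careful — either cite the standard $\D$-module formalism of minimal/intermediate extensions, or give a hands-on argument using that $\pi$ is étale over $V$ so $\pi_{V,+}(\Oc_U)$ is a local system on $V$ whose $\D_V$-submodules correspond bijectively (via monodromy $=G(r,n)$) to $G(r,n)$-subrepresentations, matching the $\D_Y$-picture via the correspondence recalled from \cite{IBK}.
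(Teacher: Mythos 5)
Your argument is sound, but note that the paper itself gives no proof of this proposition: it simply cites \cite[Proposition 2.8]{IBK}, so your write-up is necessarily a self-contained substitute rather than a parallel of the text. What you do differently is run the proof through the general $\D$-module formalism: Kashiwara/decomposition theorem plus the observation that $\Oc_X$ is simple and that a finite map has no higher direct images (giving (i)), then exactness of the localization $i^+$, base change $i^+\pi_+\cong\pi_{V,+}j^+$, and the fact that each simple summand $M_k$ satisfies $M_k=i_{!*}\,i^+M_k$ (giving (ii)); the cited reference instead works more elementarily with the concrete realization of $\pi_+(\Oc_X)$ inside $\Oc_U$ and the Galois/monodromy correspondence over $V$, which is also the mechanism the rest of this paper actually uses (Proposition 3.1 and the equivalence of Proposition 4.4). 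Your approach buys generality and a clean structural proof; the reference's approach buys explicitness and is what the later sections lean on. Two small points to tighten: the nonvanishing of $i^+(M_k)$ is most transparently justified exactly as you suggest at the end, via Proposition 3.1 --- $\pi_+(\Oc_X)$ embeds in $\Oc_U$, hence is torsion-free over $\Oc_Y$ and can have no summand supported on the discriminant; and the aside claiming $i^+$ is fully faithful on modules ``lisse on $V$'' is loose as stated (the kernel or cokernel of such a map need not be supported on the branch locus in general --- full faithfulness requires the modules to be intermediate extensions, i.e.\ to have no sub or quotient supported there), but this aside is dispensable since your $i_{!*}$ argument, or the monodromy argument over $V$, already carries the simplicity claim.
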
 
\begin{proof}
See \cite[Proposition 2.8]{IBK}.
\end{proof}
\subsection{Action description}
As we want to study the polynomial representation of a ring of invariant differential operators localized at $\Delta^2,$ it is convenient to precisely describe the action of that ring  on the polynomials ring.\\

Let $\Dc_X=\bC \langle x_1, \ldots, x_n, \frac{\partial}{\partial x_1}, \ldots, \frac{\partial}{\partial x_n} \rangle$ be the ring of differential operators associated with the polynomial ring $\Oc_X=\bC [x_1,\ldots,x_n]$, and $\Oc_Y=\bC[x_1, \ldots,x_n]^{G(r,n)}= \bC[y_1,\ldots,y_n]$ be the ring of invariant polynomials under the real reflection group $G(r,n).$ 

We denote by $\Dc_Y= \bC \langle y_1, \ldots, y_n, \frac{\partial}{\partial y_1}, \ldots, \frac{\partial}{\partial y_n} \rangle $ the ring of differential operators associated with  $\Oc_Y= \bC [y_1,\ldots,y_n]$.  By \cite{LS1}, $\Dc_Y$ is the ring of invariant  differential operators under the action of the  reflection group $G(r,n)$.

\textbf{Notations} We adopt the following notations  $$\displaystyle \OO:=\bC [x_1,\ldots,x_n, \Delta^{-1}],\  \Oc_{V} := \bC [y_1,\ldots,y_n, \Delta^{-2}],\  \DD:= \bC \langle y_1, \ldots, y_n, \frac{\partial}{\partial y_1}, \ldots, \frac{\partial}{\partial y_n}, \Delta^{-2} \rangle.$$

\begin{lemma}

$\OO$ is a $\DD$-module. 
\end{lemma}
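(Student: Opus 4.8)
The plan is to exhibit explicit well-defined actions of the generators $y_i$, $\partial/\partial y_i$, and $\Delta^{-2}$ of $\DD$ on $\OO$, and check the defining relations of the ring $\DD$ (the Weyl-algebra relations among the $y_i$, $\partial/\partial y_i$, together with the fact that $\Delta^{-2}$ is central-free and behaves as a localization). First I would note that $\OO = \bC[x_1,\dots,x_n,\Delta^{-1}]$ carries a natural $\Dc_X$-module structure (it is the localization of $\Oc_X$ at $\Delta$, hence a holonomic $\Dc_X$-module, using that $\Dc_X$ acts on any localization of $\Oc_X$ by the quotient rule), and in particular the derivations $\partial/\partial x_i$ act on $\OO$. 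Since $\Oc_Y = \Oc_X^{G(r,n)} = \bC[y_1,\dots,y_n] \subset \Oc_X$, each $y_j$ is a polynomial in the $x_i$, so multiplication by $y_j$ preserves $\OO$; and $\Delta^{-2} = (\Delta^{-1})^2 \in \OO$, so multiplication by $\Delta^{-2}$ preserves $\OO$ as well.

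The one point that needs genuine justification is that the invariant vector fields $\partial/\partial y_j$ act on $\OO$ — i.e. that, under $\pi$, the coordinate derivations $\partial/\partial y_j$ on $Y$ lift to differential operators on $\OO$. Here I would invoke the cited fact (from \cite{LS1}) that $\Dc_Y = \bC\langle y_j, \partial/\partial y_j\rangle$ is the ring of $G(r,n)$-invariant differential operators on $\Oc_X$; concretely this means each $\partial/\partial y_j$ can be written as a $\bC[x]$-linear combination $\sum_i a_{ij}(x)\,\partial/\partial x_i$ with $a_{ij}$ rational functions whose denominators are powers of the Jacobian $\Delta$ (this follows from inverting the Jacobian matrix of $\pi$, which is generically invertible since $\pi$ is finite and separable). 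Thus $\partial/\partial y_j \in \Dc_X[\Delta^{-1}]$, and since $\OO$ is visibly a module over $\Dc_X[\Delta^{-1}]$, each $\partial/\partial y_j$ acts on $\OO$. Finally, one checks the relations: the $y_j$ commute among themselves and with $\Delta^{-2}$ (all are multiplications by elements of the commutative ring $\OO$); the $\partial/\partial y_j$ commute among themselves and satisfy $[\partial/\partial y_j, y_k] = \delta_{jk}$ and $[\partial/\partial y_j, \Delta^{-2}] = \partial(\Delta^{-2})/\partial y_j$ — these hold already inside $\Dc_X[\Delta^{-1}]$ acting on $\OO$, hence hold for the induced action. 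Therefore the $\DD$-action is well defined and $\OO$ is a $\DD$-module.

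I would organize the write-up in three short steps: (1) $\OO$ is a $\Dc_X$-module, being the localization $\Oc_X[\Delta^{-1}]$, and by the usual formula $\partial_{x_i}$ acts on it; (2) all of $y_j$, $\Delta^{-2}$ act by multiplication, and all of $\partial/\partial y_j$ lie in $\Dc_X[\Delta^{-1}]$ (inverting the Jacobian matrix of $\pi$), hence act on $\OO$; (3) the Weyl-algebra relations in $\DD$ are inherited from $\Dc_X[\Delta^{-1}]$, so the action descends to a genuine $\DD$-module structure.

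The main obstacle is step (2): making precise that the invariant derivations $\partial/\partial y_j$ really do lie in $\Dc_X$ localized at $\Delta$, rather than requiring a larger localization or extra denominators. This rests on the identity $dx = \Delta\, dy$ (already set up before Proposition~\ref{prop:genericstudy}), equivalently that the Jacobian matrix $(\partial y_j/\partial x_i)$ has determinant $\Delta$, so its inverse has entries in $\bC[x][\Delta^{-1}]$; combined with the chain rule $\partial/\partial y_j = \sum_i (\partial x_i/\partial y_j)\,\partial/\partial x_i$ interpreted on the function field, this gives the claim. Everything else is a routine verification of localization and commutation relations, and can be dispatched quickly.
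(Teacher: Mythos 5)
Your proof is correct and follows essentially the same route as the paper: the paper also writes the column of operators $\partial/\partial x_i$ as the Jacobian matrix $A=(\partial y_j/\partial x_i)$ times the column of $\partial/\partial y_j$ and inverts $A$ (with $\det A=\Delta$, so the inverse has entries in $\bC[x,\Delta^{-1}]$) to let the $\partial/\partial y_j$ act on $\OO$. Your write-up simply spells out the localization and commutation checks that the paper leaves implicit.
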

\begin{proof} 
Let us make clear the  action of $\DD$ on $\OO.$\\
Let $\displaystyle A=  \displaystyle \bigg(\frac{\partial y_j}{ \partial x_i} \bigg)_{1\leq i,j \leq n}$ so that $\det(A)= \Delta$. We get the following equation
$$ \left (\begin{array}{ccc} \frac{\partial}{\partial x_n}\\  \vdots\\ \frac{\partial}{\partial x_n} \end{array} \right)= A  \left (\begin{array}{ccc} \frac{\partial}{\partial y_1} \\ \vdots \\  \frac{\partial }{\partial y_n} \end{array} \right).$$
 It follows that 
$$ 
\left (\begin{array}{ccc} \frac{\partial}{\partial y_1} \\ \vdots \\  \frac{\partial }{\partial y_n} \end{array} \right)= A^{-1} \left (\begin{array}{ccc} \frac{\partial}{\partial x_n}\\  \vdots\\ \frac{\partial}{\partial x_n} \end{array} \right)
$$
and it is now clear that $\OO$ is a $\DD$-module. 
\end{proof}

We know that  $\OO$ a $\DD$-  semisimple module. what are the simple components of $\OO$ as $\DD$-module  and their multiplicities?\\

\subsection{Simple components and their multiplicities}

In this section, we state our main result. We use the representation theory of the generalized symmetric group $G(r,n)$ to yield results on modules over the ring of differential operators. It is well-known that 
$$ \Oc_X= \bC[G(r,n)] \otimes \Oc_Y \  \mbox{as } \  \Oc_Y\mbox{-modules}.$$ 
Let us consider the multiplicative closed set $S=\{ \Delta^{k} \}_{k \in \bN} \subset \Oc_X.$ It follows that:
$$ S^{-1} \Oc_X= \bC[G(r,n)] \otimes S^{-1}\Oc_Y \  \mbox{as } \  S^{-1}\Oc_Y\mbox{-modules}.$$ where $S^{-1} \Oc_X$ and $S^{-1}\Oc_Y$ are the localizations of  $\Oc_X$ and $\Oc_Y$ at $S$ respectively. But $S^{-1} \Oc_X= \OO$ and $S^{-1} \Oc_Y= \Oc_{V}$ , whereby we get
$$ \OO=  \bC[G(r,n)] \otimes \Oc_{V} \  \mbox{as} \  \bC[G(r,n)]\mbox{-modules}.$$

\begin{lemma}
There exists an injective map
$$ \bC[G(r,n)] \hookrightarrow  \Homo_{\bC}( \OO, \OO ).$$
\end{lemma}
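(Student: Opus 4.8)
The plan is to construct the injection explicitly by letting $G(r,n)$ act on $\OO = S^{-1}\Oc_X$ by its natural action on the polynomial ring $\Oc_X = \bC[x_1,\ldots,x_n]$, extended to the localization. First I would observe that the set $S = \{\Delta^k\}_{k\in\bN}$ is $G(r,n)$-stable up to units: since $\Delta^2$ is the discriminant, an invariant polynomial, each $g \in G(r,n)$ sends $\Delta$ to $\pm\Delta$ (more precisely to $\chi(g)\Delta$ for a linear character $\chi$), so the action of $G(r,n)$ on $\Oc_X$ extends uniquely to an action on $\OO$ by $\bC$-algebra automorphisms. This gives a group homomorphism $G(r,n) \to \Aut_{\bC\text{-alg}}(\OO) \subset \Homo_\bC(\OO,\OO)$, which extends $\bC$-linearly to an algebra homomorphism $\rho: \bC[G(r,n)] \to \Homo_\bC(\OO,\OO)$.

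The key step is injectivity of $\rho$. For this I would use the decomposition $\OO = \bC[G(r,n)] \otimes \Oc_V$ as a $\bC[G(r,n)]$-module established just above the statement, where $\bC[G(r,n)]$ carries the left regular representation. A group algebra element $a = \sum_{g} c_g g$ acts on this module, in particular on the submodule $\bC[G(r,n)] \otimes 1 \cong \bC[G(r,n)]$, by left multiplication; but left multiplication by $a$ on $\bC[G(r,n)]$ is zero only if $a = 0$ (apply it to the identity element $1 \in \bC[G(r,n)]$, which yields $a\cdot 1 = a$). Hence $\rho(a) = 0$ forces $a = 0$, so $\rho$ is injective. Alternatively, and perhaps more transparently, one can invoke that the regular representation is faithful, so the composite $\bC[G(r,n)] \to \End_\bC(\bC[G(r,n)]) \hookrightarrow \Homo_\bC(\OO,\OO)$ (the last map coming from the direct-sum decomposition into copies of the regular representation indexed by a $\bC$-basis of $\Oc_V$) is injective.

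The main obstacle I anticipate is making the first step — the well-definedness of the $G(r,n)$-action on the localized ring $\OO$ — genuinely rigorous: one must check that $\Delta$ (not just $\Delta^2$) transforms by a character of $G(r,n)$, so that $g(\Delta^{-1})$ again lies in $S^{-1}\Oc_X$. This follows because the Jacobian $\Delta = \det(\partial y_j/\partial x_i)$ is, up to a nonzero scalar, a product of the linear forms vanishing on the reflecting hyperplanes raised to appropriate powers, hence a relative invariant; concretely, for the transpositions in $\Sc_n \subset G(r,n)$ it changes sign, and for the diagonal generators $(\xi,1,\ldots,1;e)$ it is multiplied by a root of unity. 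Once this is in place, the rest is formal. I would therefore structure the proof as: (1) $S$ is $G(r,n)$-stable up to units, so $\OO$ is a $\bC[G(r,n)]$-module algebra, giving $\rho: \bC[G(r,n)] \to \Homo_\bC(\OO,\OO)$; (2) identify $\OO \cong \bC[G(r,n)] \otimes_{\bC} \Oc_V$ as $\bC[G(r,n)]$-modules and restrict $\rho(a)$ to the regular-representation summand; (3) conclude $\ker\rho = 0$ since the regular representation of a finite group over a field of characteristic zero is faithful.
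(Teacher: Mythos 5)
Your proof is correct and follows essentially the same route as the paper: both arguments reduce injectivity to the faithfulness of the (regular) action of $\bC[G(r,n)]$ on itself, transported into $\Homo_\bC(\OO,\OO)$ via the decomposition $\OO \cong \bC[G(r,n)]\otimes \Oc_V$ with $\Oc_V$ stable under the group action. Your additional verification that the action extends to the localization (because $\Delta$ is a relative invariant, transforming by a linear character) is a useful detail the paper leaves implicit, but it does not change the underlying argument.
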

\begin{proof}
The $\bC[G(r,n)]$-module  $\bC[G(r,n)]$ acts faithfully on itself by multiplication, and this multiplication yields an injective map $\bC[G(r,n)] \hookrightarrow \Homo_{\bC} \bigg( \bC[G(r,n)] , \bC[G(r,n)]\bigg).$ Since $\Oc_{V} $ is invariant under this action of $\bC[G(r,n)]$, we get the expected injective map. 
\end{proof}

\begin{proposition}
There exists an injective map 
$$ \bC[G(r,n)] \hookrightarrow  \Homo_{\Dc_{V}}( \OO , \OO ).$$
\end{proposition}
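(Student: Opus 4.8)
The plan is to refine the injection $\bC[G(r,n)] \hookrightarrow \Homo_{\bC}(\OO,\OO)$ from the previous lemma by showing that the image actually lands inside the subspace of $\Dc_V$-linear endomorphisms. Concretely, for $g \in G(r,n)$ write $\rho(g) \in \Homo_{\bC}(\OO,\OO)$ for the operator $f \mapsto g\cdot f$ coming from the $G(r,n)$-action on polynomials (extended to the localization $\OO = \bC[x_1,\dots,x_n,\Delta^{-1}]$, which is legitimate since $\Delta$ is a semi-invariant, so $g\cdot\Delta = \chi(g)\Delta$ for a character $\chi$, and hence $g\cdot\Delta^{-1} = \chi(g)^{-1}\Delta^{-1}$). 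Extending $\rho$ linearly over $\bC$ gives the map $\bC[G(r,n)] \hookrightarrow \Homo_{\bC}(\OO,\OO)$; injectivity is exactly the content of the preceding lemma. So the only thing left to prove is that each $\rho(g)$ commutes with the action of every operator in $\DD$.

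**Key steps.** First I would reduce the commutation check to generators of $\DD$ as a $\bC$-algebra: it suffices to show $\rho(g)$ commutes with multiplication by each $y_j$, with multiplication by $\Delta^{-2}$, and with each $\partial/\partial y_j$. Commutation with multiplication by $y_j$ and by $\Delta^{-2}$ is immediate because the $y_j$ and $\Delta^2$ are $G(r,n)$-invariant elements of $\OO$, so $\rho(g)(y_j f) = (g\cdot y_j)(g\cdot f) = y_j\,\rho(g)(f)$, and similarly for $\Delta^{-2}$ using $g\cdot\Delta^{-2} = \Delta^{-2}$. The substantive point is commutation with $\partial/\partial y_j$. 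Here I would use the matrix identity from the lemma "$\OO$ is a $\DD$-module", namely $(\partial/\partial y_1,\dots,\partial/\partial y_n)^{t} = A^{-1}(\partial/\partial x_1,\dots,\partial/\partial x_n)^{t}$ with $A = (\partial y_j/\partial x_i)$ and $\det A = \Delta$, so that each $\partial/\partial y_j$ is a $\bC[x_1,\dots,x_n,\Delta^{-1}]$-linear combination of the $\partial/\partial x_i$. Then I invoke the standard equivariance of the Euclidean derivations under the linear substitution defining $g$: for $g$ acting by $x_i \mapsto \xi^{i_{\sigma(i)}} x_{\sigma(i)}$ (a $\bC$-linear change of variables), the chain rule gives that $\rho(g)$ conjugates the tuple $(\partial/\partial x_i)_i$ by the (constant) matrix of $g^{-1}$, while it conjugates the tuple $(\partial/\partial y_j)_j$ by the matrix of the induced map on $Y$-coordinates. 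Since the $y_j$ are $G(r,n)$-invariant, that induced map is the identity, so $\rho(g)\circ \partial/\partial y_j = \partial/\partial y_j \circ \rho(g)$ for each $j$. (Equivalently and more cleanly: for any invariant $y_j$ and any $f$, differentiate $\rho(g)(f)$ and use that $y_j = g\cdot y_j$ together with the chain rule to get $\partial(g\cdot f)/\partial y_j = g\cdot(\partial f/\partial y_j)$.) Having checked the generators, I conclude $\rho(g) \in \Homo_{\DD}(\OO,\OO)$ for all $g$, hence the $\bC$-linear extension factors through $\Homo_{\DD}(\OO,\OO)$, and the injectivity statement is inherited from the previous lemma.

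**Main obstacle.** The routine parts (invariance of $y_j$ and $\Delta^2$ under the group, extension of the action to the localization) are harmless; the one place that needs genuine care is the commutation of $\rho(g)$ with $\partial/\partial y_j$, because a priori the $G(r,n)$-action is defined on $\Oc_X$ in the $x$-variables, and one must make rigorous the passage to the $y$-variables. The clean way to phrase it is: the $\bC$-algebra automorphism $\rho(g)$ of $\OO$ restricts to the identity on $\Oc_V \subset \OO$ (since that subring is generated by the invariants $y_1,\dots,y_n$ and $\Delta^{-2}$), and therefore its canonical extension to the algebra of differential operators $\DD$ — which acts on $\OO$ through derivations built over $\Oc_V$ — commutes with every element of $\DD$; more precisely, $\rho(g)\,\partial_{y_j}\,\rho(g)^{-1}$ is again a derivation of $\OO$ over $\Oc_V$ extending $\partial_{y_j}$ on $\Oc_V$, and since such a derivation is uniquely determined by its values on the generators, it equals $\partial_{y_j}$. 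I would present exactly this uniqueness-of-derivation argument, as it sidesteps any explicit Jacobian computation and makes the $\Dc_V$-linearity transparent.
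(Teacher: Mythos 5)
Your proposal is correct and follows essentially the same route as the paper: reduce to the generators $y_1,\dots,y_n$, $\Delta^{-2}$, $\partial_{y_1},\dots,\partial_{y_n}$ of $\DD$, dispose of the multiplications by invariance, and handle the $\partial_{y_j}$ by the uniqueness of the extension of a derivation from the invariant field $K=L^{G(r,n)}$ to the Galois extension $L$ (the paper cites Chambert-Loir, Th\'eor\`eme 6.2.6, for exactly this), so that $\sigma^{-1}\partial_{y_j}\sigma=\partial_{y_j}$ for all $\sigma\in G(r,n)$. Your closing uniqueness-of-derivation argument is precisely the paper's proof, so no further comparison is needed.
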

\begin{proof}
Since  $\Dc_{V}= \bC \langle y_1, \ldots, y_n, \partial y_1, \ldots, \partial y_n, \Delta^{-2}  \rangle $, we only need to show that every element of $\bC[G(r,n)]$ commutes with $y_1,\ldots,y_n, \partial y_1, \ldots, \partial y_n.$ 
\begin{enumerate}
\item[$\bullet$] It is clear that every element of $\bC[G(r,n)]$ commutes with $y_i,\ i=1, \ldots,n$.
\item[$\bullet$] Let us show that every element of $\bC[G(r,n)]$ commutes with $\partial y_i,\ i=1, \ldots,n$.
Let ${\bf D}$ be a derivation on the field $ K=\bC(y_1, \ldots, y_n)$  of fractions of $\Oc_V$, then $(K, {\bf D})$ is a differential field. Let $L=\bC(x_1, \ldots, x_n)$ be the field of fractions of $\OO$. We have that  $K=L^{G(r,n)}$ is the fixed field and $L$ is a Galois extension of $K$, with Galois group $G(r,n)$.  Then by \cite[Th\'eor\`eme 6.2.6]{Cham}  there exists a unique derivation on $L$ which extends ${\bf D},$ then  $(L , {\bf D})$ is also a differential ring. In this  way, $\sigma^{-1}  {\bf D} \sigma = {\bf D}$ for every $\sigma \in G(r,n).$ Therefore $\sigma {\bf D}= {\bf D}\sigma$ and $ \sigma $ commute with ${\bf D}.$   
\end{enumerate}
\end{proof}

\begin{corollary}
$$\bC[ G(r,n)] \cong \Homo_{\DD} ( \OO , \OO )$$ 
\end{corollary}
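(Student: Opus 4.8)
The plan is to prove surjectivity of the injection $\bC[G(r,n)]\hookrightarrow\Homo_{\DD}(\OO,\OO)$ furnished by the preceding Proposition; together with that injectivity this yields the stated isomorphism. The idea is to pass to the generic point and reduce the question to the classical description of the differential endomorphisms of a Galois extension of fields.

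First I would reduce to fields. Put $K=\bC(y_1,\ldots,y_n)$ and $L=\bC(x_1,\ldots,x_n)$. Since $G(r,n)$ is a complex reflection group, $\Oc_X$ is free of rank $|G(r,n)|$ over $\Oc_Y$ (equivalently, the displayed isomorphism $\OO=\bC[G(r,n)]\otimes\Oc_V$ exhibits $\OO$ as free of that rank over $\Oc_V$), and inverting all nonzero elements of $\Oc_V$ gives $K\otimes_{\Oc_V}\OO=L$, a Galois field extension of $K$ with group $G(r,n)$. Any $\phi\in\Homo_{\DD}(\OO,\OO)$ is in particular $\Oc_V$-linear, hence extends uniquely to a $K$-linear endomorphism $\phi_K$ of $L$; the derivations $\partial/\partial y_i$ act on $\OO$ and extend uniquely to $L$ (the differential Galois input already used in the Proposition), so $\phi_K$ commutes with each $\partial/\partial y_i$ on $L$, and the assignment $\phi\mapsto\phi_K$ is injective. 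Thus it suffices to show that every $K$-linear endomorphism of $L$ commuting with all $\partial/\partial y_i$ is a $\bC$-linear combination of the automorphisms in $G(r,n)$, and then restrict back to $\OO$, noting that every $\sigma\in G(r,n)$ preserves $\OO$ because it permutes the $x_i$ up to roots of unity and multiplies $\Delta$ by a character, so that $\sigma(\Delta^{-1})\in\OO$.

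Next I would carry out the field computation. By Dedekind's independence of characters the elements of $G(r,n)$ are linearly independent over $L$ inside $\End_{\bC}(L)$, and a dimension count over $K$ (both sides equal $|G(r,n)|^2$) shows that every $K$-linear endomorphism of $L$ is uniquely $\sum_{\sigma\in G(r,n)}a_\sigma\,\sigma$ with $a_\sigma\in L$. Using that the unique extension of $\partial/\partial y_i$ to $L$ is $G(r,n)$-equivariant (exactly the second bullet of the Proposition) one computes $[\,\partial/\partial y_i,\ a_\sigma\sigma\,]=(\partial a_\sigma/\partial y_i)\,\sigma$, so by the $L$-independence of the $\sigma$ the operator $\sum_\sigma a_\sigma\sigma$ commutes with all $\partial/\partial y_i$ precisely when $\partial a_\sigma/\partial y_i=0$ for every $i$ and $\sigma$, i.e. when each $a_\sigma\in\bigcap_{i=1}^n\ker(\partial/\partial y_i)$. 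Because $y_1,\ldots,y_n$ is a transcendence basis of $L/\bC$ and $\bC$ is algebraically closed in the rational function field $L$, this common kernel is exactly $\bC$. Hence the differential endomorphism ring of $L$ is $\bigoplus_{\sigma\in G(r,n)}\bC\,\sigma\cong\bC[G(r,n)]$; writing $\phi_K=\sum_\sigma c_\sigma\sigma$ with $c_\sigma\in\bC$ and restricting to $\OO$ exhibits $\phi$ as the image of $\sum_\sigma c_\sigma\sigma\in\bC[G(r,n)]$, which gives surjectivity.

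The hard part is the reduction in the first step: one must check carefully that a $\DD$-module endomorphism of the ring $\OO$ genuinely extends to a $K$-linear, derivation-commuting endomorphism of $L$ with no loss of information, and that $\bigcap_{i}\ker(\partial/\partial y_i)=\bC$ — this last identification is what forces the differential endomorphism ring to collapse onto the finite group rather than being larger. The field-theoretic core is then routine (independence of characters plus one commutator computation). One can alternatively run the argument over $\Oc_V$ directly, since $\OO/\Oc_V$ is finite étale Galois (the ramification of $\Oc_X/\Oc_Y$ lies along $\Delta^2=0$, which has been inverted), but passing to the fraction fields makes the differential Galois statement cited in the Proposition immediately applicable.
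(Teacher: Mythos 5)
Your proof is correct. Note that for this corollary the paper gives no argument of its own --- it simply refers to \cite[Corollary 26]{IBK} --- so what you have written is in effect a self-contained substitute for that citation. In substance your route is the one underlying the cited result (and Proposition 4.4 of this paper): generically $L/K$ is Galois with group $G(r,n)$, and the differential endomorphisms of $L$ are exactly the $\bC$-span of the Galois group. Where the reference gets this from the equivalence of categories $\Mod(k[G])\simeq \Mod^{L}(T_{K/k})$ --- under which $\OO$ corresponds (generically) to the regular representation, so its $\DD$-endomorphism ring is $\End_{\bC[G(r,n)]}(\bC[G(r,n)])\cong \bC[G(r,n)]$ --- you prove the field-level statement by hand: $\End_K(L)=\bigoplus_{\sigma\in G(r,n)} L\,\sigma$ via Dedekind independence plus a dimension count, the commutator identity $[\partial/\partial y_i,\ a_\sigma\sigma]=(\partial a_\sigma/\partial y_i)\,\sigma$ using uniqueness (hence $G$-equivariance) of the extension of derivations, and the identification of the common constants of $L$ with $\bC$. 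The two reduction steps that genuinely need checking --- that a $\DD$-endomorphism of $\OO$ extends to a $K$-linear, derivation-commuting endomorphism of $L$ (a routine quotient-rule computation using $\Oc_{V}$-linearity, since $\partial/\partial y_i$ preserves $\Oc_{V}$), and that each $\sigma$ preserves $\OO$ because $\sigma(\Delta)$ is a root of unity times $\Delta$ --- are both addressed in your write-up. The hands-on version buys self-containedness and makes transparent why the endomorphism ring collapses onto the finite group algebra rather than something larger; the categorical route is shorter once Proposition 4.4 is in place.
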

\begin{proof}
see \cite[Corollary 26 ]{IBK}
\end{proof}

Before we state our  main result, let us recall some facts.\\
By Maschke's Theorem \cite[Chap XVIII]{Lan}, we know that $\bC[G(r,n)]$ is a semi-simple ring, and  

$$\bC[G(r,n)] = \displaystyle \bigoplus_{ \lambda \in \Pc_{r,n}} R_{\lambda}, $$

 where  $\Pc_{r,n}$ be the set of $r$-tuples of Young diagrams   and  $R_{\lambda}$ are simple rings. In fact $ \displaystyle R_\bla = \bigoplus_{ S \in \STab(\bla) } V_S (\bla) $ ( see  Theorem 2.2).  
  We have the following corresponding decomposition of the identity element of $\bC[G(r,n)]$:
$$ 1=\sum_{ \lambda \in \Pc_{r,n}}  r_{\lambda},$$
 where $r_{\lambda}$ is the identity element of $R_{\lambda}$, with  $r_{\lambda} ^2=1$ and $r_{\lambda} r_{\mu} =0$ if $\lambda \neq \mu.$  $\{ r_{\lambda} \}_{ \lambda \in \Pc_{r,n}}$ is the set of primitive central idempotents of $\bC[G(r,n)]$. In fact  $ R_{\lambda} = r_{\lambda} \bC[ G(r,n)]$, for $\lambda \in \Pc_{r,n} $.
Let $n \in \bN^*, \bla \in  \Pc_{r,n}$, we set  $ \displaystyle \Tab(n)=\displaystyle \cup_{\lambda \in \Pc_{r,n}} \Tab(\lambda)$ and $\displaystyle \STab(n)=\cup_{\lambda \in \Pc_{r,n}} \STab(\lambda)$.

\begin{theorem}

For every  primitive idempotent $e \in \bC[G(r,n)]$.
\begin{enumerate}
\item $ e \OO$ is a nontrivial  $\Dc_{V}$-submodule of $\OO,$
\item The $\DD$-module $e \OO$ is simple,
\item There exist  $\lambda  \in \Pc_{r,n}$ and a higher Specht polynomial $F_T^S$  (with $S,T \in \STab(\lambda))$ such that $e  \OO= \DD F_T^S.$
\end{enumerate}
\end{theorem}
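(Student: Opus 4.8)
The plan is to reduce the statement to facts already assembled in the paper: the isomorphism $\bC[G(r,n)]\cong\Homo_{\DD}(\OO,\OO)$ (Corollary) together with the decomposition $\OO=\bC[G(r,n)]\otimes\Oc_V$ as $\bC[G(r,n)]$-modules, and the higher Specht polynomial description of the simple components of $\bC[G(r,n)]$ (Theorem 2.2 and Theorem 2.3). First, write the primitive idempotent $e$ inside some minimal two-sided ideal $R_\lambda=r_\lambda\bC[G(r,n)]$, so $e$ generates a minimal \emph{left} ideal $\bC[G(r,n)]e$ which, as a $G(r,n)$-module, is isomorphic to the irreducible representation $V_S(\lambda)$ for an appropriate $S\in\STab(\lambda)$. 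Under the tensor decomposition $\OO\cong\bC[G(r,n)]\otimes_{\bC}\Oc_V$, multiplication by $e$ is $\id_{e\bC[G(r,n)]}\otimes\id$ composed appropriately, so $e\OO\cong(\bC[G(r,n)]e)\otimes_{\bC}\Oc_V$, and in particular $e\OO$ is a free $\Oc_V$-module of rank $\dim_\bC V_S(\lambda)\ge 1$; this gives (1), since $\partial_{y_i}$ and $\Delta^{-2}$ commute with $e$ (that is exactly the content of the Proposition preceding the Corollary), so $e\OO$ is stable under $\DD$ and is nonzero.

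For (2), I would argue that $\End_{\DD}(e\OO)\cong e\bC[G(r,n)]e$. Indeed the Corollary identifies $\End_{\DD}(\OO)$ with $\bC[G(r,n)]$ acting by left multiplication, and a standard idempotent computation gives $\End_{\DD}(e\OO)=e\End_{\DD}(\OO)e=e\bC[G(r,n)]e$. Since $e$ is a \emph{primitive} idempotent of the semisimple algebra $\bC[G(r,n)]$, the algebra $e\bC[G(r,n)]e$ is a division ring, hence equal to $\bC$ (everything is over $\bC$, algebraically closed). A module whose $\DD$-endomorphism ring is a division ring, and which moreover is known to be a direct summand of the semisimple module $\OO$ (semisimplicity of $\OO$ over $\DD$ is recalled in the text, via the Kashiwara/Proposition~\ref{prop:etale.semisimple2} circle of ideas), must be simple: a semisimple module with one-dimensional endomorphism ring is isotypic of multiplicity one, i.e. simple. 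Alternatively, and perhaps more cleanly, decompose $\OO=\bigoplus_{\lambda,S}V_S(\lambda)\otimes\Oc_V$ using Theorem 2.2(4); each summand $V_S(\lambda)\otimes\Oc_V$ is a $\DD$-module, and $e\OO$ is isomorphic to one such summand; then show directly that $V_S(\lambda)\otimes\Oc_V$ is $\DD$-simple because any nonzero $\DD$-submodule, being $\Oc_V$-stable and $G(r,n)$-stable under the induced actions, must be all of it by irreducibility of $V_S(\lambda)$ over $\bC[G(r,n)]$ combined with $\Oc_V$ being the "field of constants" part. I expect the cleanest route is the endomorphism-ring one.

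For (3): by Theorem 2.2(2), a basis of $V_S(\lambda)$ inside $\Oc_X\subset\OO$ is given by the higher Specht polynomials $\{F_T^S : T\in\STab(\lambda)\}$; pick any one of them, say $F_T^S$ with $T$ standard. Since $e\OO$ is $\DD$-simple by (2) and $F_T^S$ is a nonzero element of it (it lies in the copy of $V_S(\lambda)$ spanning, over $\Oc_V$, the module $e\OO$ — here one must match the idempotent $e$ to the pair $(S,\lambda)$, which is exactly what the construction of higher Specht polynomials provides, each $F_T^S$ being $\prod_\nu \eb_{T^\nu}(\cdots)$ with the Young symmetrizer giving the relevant primitive idempotent), the cyclic submodule $\DD F_T^S$ is a nonzero $\DD$-submodule of the simple module $e\OO$, hence equals $e\OO$. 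The main obstacle I anticipate is the bookkeeping in the last step: making precise that the \emph{given} abstract primitive idempotent $e$ corresponds, up to the algebra isomorphisms in play, to the concrete idempotent attached to a pair $(S,\lambda)$ and a standard tableau $T$, so that $F_T^S\in e\OO$ rather than merely in some conjugate submodule. Since all primitive idempotents generating the same minimal two-sided ideal $R_\lambda$ are conjugate, and conjugation by a unit of $\bC[G(r,n)]$ is a $\DD$-module automorphism of $\OO$ carrying $e\OO$ to $e'\OO$, one reduces to the case where $e$ is the Young-symmetrizer idempotent attached to some $(S,\lambda)$, and then $F_T^S=e\cdot(\text{monomial})$ by construction, so $F_T^S\in e\OO$, completing the proof.
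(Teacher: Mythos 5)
Your parts (1) and (2) follow essentially the paper's own route. Nontriviality comes from the fact that $e$ commutes with $\DD$ (Proposition 3.5/Corollary 3.6) together with $e\bC[G(r,n)]e\cong\bC e\neq\{0\}$; the paper gets the nonvanishing from Boerner's theorem applied to $eV_S(\lambda)$, your count via $\OO\cong\bC[G(r,n)]\otimes_{\bC}\Oc_{V}$ carries the same information. For simplicity, the paper proves $\Homo_{\DD}(e\OO,e\OO)\cong\bC$ by decomposing $\Homo_{\DD}(\OO,\OO)\cong\bC[G(r,n)]$ blockwise through Wedderburn matrix units and a commutative diagram, and then invokes semisimplicity of $\OO$ over $\DD$ exactly as you do; your identity $\End_{\DD}(e\OO)\cong e\,\End_{\DD}(\OO)\,e\cong e\bC[G(r,n)]e\cong\bC$ is a cleaner shortcut to the same statement, resting on the same Corollary 3.6, so (2) is fine and arguably tidier than the printed argument.

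In (3), however, your final reduction has a genuine gap: the statement asserts the equality $e\OO=\DD F_T^S$ inside $\OO$, and conjugation does not deliver an equality. If $e'=ueu^{-1}$ is the Young-symmetrizer idempotent, multiplication by the unit $u$ is indeed a $\DD$-automorphism of $\OO$, but it only gives $e\OO=u^{-1}(e'\OO)=\DD\,(u^{-1}F_T^S)$, and $u^{-1}F_T^S$ is in general not a higher Specht polynomial; worse, $e\OO$ need not contain any higher Specht polynomial. Concretely, on the isotypic component $M(\lambda)\cong V_S(\lambda)\otimes_{\bC}W$ (with $G(r,n)$ acting on the first factor and $\DD$ on the multiplicity space $W$, since the two actions commute), one has $e\OO=e\,M(\lambda)=L\otimes W$ where $L=e\,V_S(\lambda)$ is a line, and two distinct lines give $\DD$-submodules meeting in $0$; since $F_T^S$ lies in $e_T\OO=\bigl(e_T V_S(\lambda)\bigr)\otimes W$, the relation $F_T^S\in e\OO$ forces $L=\bC F_T^S$, i.e.\ forces $e\OO=e_T\OO$ for the distinguished idempotent $e_T$ attached to the standard tableau $T$ (for which $e_TF_T^S=F_T^S$ by construction, cf.\ Lemma 4.1). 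For a primitive idempotent whose image line is not spanned by one of the $F_T^S$, your argument yields only the isomorphism $e\OO\cong\DD F_T^S$, not the stated equality. You are in good company: the paper's own phrase ``$eF_T^S$ is a scalar multiple of $F_T^S$'' for an arbitrary primitive idempotent has exactly the same defect, and what is actually established (and used later, in Proposition 3.9 and Theorem 4.6) is the equality for the idempotents $e_T$ coming from standard tableaux. So either restrict (3) to those idempotents, or state it up to isomorphism of $\DD$-modules; as written, your conjugation step does not close the argument.
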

\begin{proof}\
\begin{enumerate}
\item Let $e  \in \bC[G(r,n)]$ be a  primitive  idempotent, we know that $\bC[G(r,n)]e $ is a $G(r,n)$- irreducible representation.  Theorem 2.3 states that  there is $\lambda \in \Pc_{r,n}$ and $S\in \STab(\lambda)$ such  that $\bC[G(r,n)] e \cong V_S(\lambda)$, with $V_S(\lambda) \subset \OO.$ By  \cite[ Chap III, \S 4, Theorem 3.9 ]{Boerner}, we have $ e \bC[G(r,n)]e \cong \bC e  \neq \{0\}$. $ \{0\} \neq e V_S(\lambda) \subset e \OO.$  Since $e $ commutes with every element of $\DD$ and $\OO$ is a $\DD$-module, it follows that $e  \OO$ is a nontrivial $\DD$-module.\\
In fact $V_S(\lambda)$ is a cyclic $\bC[G(r,n)]$-module, i.e.,  there exist  $T, S \in \STab(\lambda)$ and a higher Specht polynomial $F_T^S$ such that  $ V_S(\lambda) = \bC[G(r,n)]  F_T^S$, so that $ \bC[G(r,n)] e \cong \bC[G(r,n)] F_T^S$. Then it follows that there is a higher Specht polynomial $F_T^S$ such  that $e  F_T^S$ is a scalar multiple of $F_T^S.$
 
\item Assume that $1 =\sum_{i=1}^s e_i$ where the $\{ e_i \}_{ 1 \leq i \leq s}$ is the set of primitive idempotents of $\bC[G(r,n)] ,$ then $\OO= \sum_{i=1}^s e_i \OO.$ Let $m \in e_i \OO \cap e_j\OO$  with $ i \neq j$ so that $m=e_i m_i$ and $ m= e_j m,$ but $ e_i e_j=0$ then $ e_im=e_ie_jm=0$ hence $m=0.$ Therefore 
$ \OO = \oplus_{i=1}^s e_i \OO$  {and}  we get:   
$$ \Homo_{\DD} (\OO, \OO) \cong  \bigoplus_{i,j =1}^s   \Homo_{\DD} (e_i\OO, e_j\OO), $$
by Corollary 3.6   we know that
$ \displaystyle \bC[G(r,n)] \cong \bigoplus_{i,j =1}^s   \Homo_{\DD} (e_i\OO, e_j\OO).$
For every $\lambda \in \Pc_{r,n}$, we pick a unique  irreducible representation $V_S(\lambda)$ for a certain $S\in \STab(\lambda)$ which we denote by $V(\lambda):= V_S(\lambda)$.
We also have, by \cite[Proposition 3.29]{Fulton-Harris}, that  
$ \displaystyle \bC[G(r,n)] \cong \bigoplus_{\lambda \in \Pc_{r,n} } \mbox{End}_{\bC} (V({\lambda})).$  
But by the Wedderburn's decomposition Theorem \cite[Chap II,\S 4, Theorem 4.2]{Boerner} we also know that 
$$ \displaystyle \bC[G(r,n)] = \bigoplus_{\lambda \in \Pc_{r,n}} r_{\lambda} \bC[G(r,n)]\ \mbox{and}\ r_{\lambda} \bC[G(r,n)]  \cong \mbox{Mat}_{f^{\lambda}}(\bC) \cong \mbox{End}_{\bC} ( \bC^{f^{\lambda}}) $$ {where}  $f^{\lambda} = \dim_{\bC} (V(\bla)).$ Each $r$ standard tableau $T$ corresponds a primitive idempotent  $e_T$, so that $r_\la = \displaystyle  \sum_{T \in \STab(\la)} e_T.$
In what follows we denote $e_i$ the primitive idempotent associated  with standard tableau $T_i,$ ( i.e. $e_i=e_{T_i}$.) Let us show that  $$ \displaystyle \bC[G(r,n)] \cong  \bigoplus_{\lambda \vdash n} \bigg( \bigoplus_{T_i, T_j \in \STab(\lambda)}   \Homo_{\DD} (e_i\OO, e_j\OO) \bigg)\  \mbox{where}\ e_i=e_{T_i}.$$ \\
 Let $x$ be an element of $ \bC[G(r,n)]$ and $r_{\lambda}$ the primitive central idempotent associated with $\lambda \in \Pc_{r,n}.$ Then $x$ induces an $\DD$-homomorphism $\OO \to \OO, m \mapsto x\cdot m;$ the multiplication by $x$. Since $r_{\lambda}$ is in the centre of $\bC[G(r,n)] $,  $ x \cdot  (r_{\lambda} \OO )=(x \cdot r_{\lambda} ) \OO \subset r_{\lambda} \OO,$ which means $x \in \displaystyle \bigoplus_{\lambda \in \Pc_{r, n}} \Homo_{\DD} ( r_{\lambda} \OO, r_{\lambda} \OO).$  It follows that
 $$ \displaystyle \Homo_{\DD} (  \OO, \OO)  \cong  \bigoplus_{\lambda \in \Pc_{r,n}}    \Homo_{\DD} (r_\la \OO, r_\la\OO) .$$
  Then $\Homo_{\DD} ( e_i \OO , e_j \OO) =\{0 \}$ if $T_i \in \STab(\lambda_i), T_j \in \STab(\lambda_j)$ and  $\lambda_i \neq \lambda_j.$ 
  We  get that   $$ \displaystyle \Homo_{\DD} (  \OO, \OO)  \cong  \bigoplus_{\lambda \in \Pc_{r,n}} \bigg( \bigoplus_{T_i, T_j \in \STab(\lambda)}   \Homo_{\DD} (e_i\OO, e_j\OO) \bigg).$$ 
 The number of direct factors in the sum $\displaystyle  \bigoplus_{T_i, T_j \in \STab(\lambda)}   \Homo_{\DD} (e_i\OO, e_j\OO) $ is $(f^{\lambda})^2.$\\
Let us show that $  \Homo_{\DD} (e_i\OO, e_j\OO) \cong \bC$ if $T_i, T_j \in \Tab(\lambda).$  Consider the following commutative diagram:

\begin{center}
\begin{tikzpicture}
  \matrix (m) [matrix of math nodes,row sep=3em,column sep=4em,minimum width=2em]
  {
    \bC[G(r,n)] & \displaystyle \Homo_{\DD} (  \OO, \OO)  \\
     r_{\lambda} \bC[G(r,n)]& \displaystyle \Homo_{\DD} (  r_{\lambda}\OO, r_{\lambda}\OO) \\};
  \path[-stealth]
    (m-1-1) edge node [left] {$\alpha_{\lambda}$} (m-2-1)
            edge  node [below] {$\phi$} (m-1-2)
    (m-2-1.east|-m-2-2) edge node [below] 
    {$\psi$}
            node [above]{ }(m-2-2)
    (m-1-2) edge node [right] {$\beta_{\lambda}$} 
    (m-2-2); edge [dashed,-] (m-2-1);
\end{tikzpicture}
\end{center}

where
 $\displaystyle \beta_{\lambda}: \bigoplus_{\mu \in \Pc_{r,n}} \Homo_{\DD} ( r_{\mu} \OO, r_{\mu} \OO) \to 
 \Homo_{\DD} ( r_{\lambda} \OO, r_{\lambda}\OO)$ and \\ $ \displaystyle \alpha_{\lambda}: \bigoplus_{\mu \in \Pc_{r, n}} r_{\mu} \bC[G(r,n)] \to r_{\lambda} \bC[G(r,n)]$ are canonical projections et $\phi$ is the isomorphism in Corollary3.6. It follows that $\psi$ is an isomorphism hence $ r_{\lambda} \bC[G(r,n)] \cong   \displaystyle \Homo_{\DD} (  r_{\lambda}\OO, r_{\lambda}\OO).$ \\
 Now  we  identify $ r_{\lambda} \bC[G(r,n)]$ with either the set  $\mbox{Mat}_{f^{\lambda}}(\bC)$ of square matrices of order $f^{\lambda}$ with coefficients in $\bC$ either with  $\displaystyle \mbox{End}_{\bC}(\bC^{f^{\lambda}}).$
 Let $E_{ij}$ be the square matrix of order $f^{\lambda}$  with 1 at the position $(i,j)$ and 0 elsewhere and $E_i=E_{i,i}, $ then we identify the primitive  idempotent $e_i \in  r_{\lambda} \bC[G(r,n)]$ with $E_i$ in $\mbox{Mat}_{f^{\lambda}} (\bC).$ Let $B=(a_{ij}) \in \mbox{Mat}_{f^{\lambda}}(\bC)$ we get $B=\sum_{i,j} a_{i,j} E_{i,j}= \sum_{i,j} E_i B E_{j}$, in fact $E_iBE_j$ is  the matrix with $a_{i,j}$ in the position $(i,j)$ and 0 elsewhere, if $R= \mbox{Mat}_{f^{\lambda}}(\bC)$  we get that  $ E_i R E_j \cong \bC.$ 
 
  This isomorphism $\psi$ implies that $$ \displaystyle  \bigoplus_{T_i,T_j \in \STab(\lambda)} E_i R E_j \cong  \bigoplus_{ T_i,T_j \in \STab(\lambda)} \Homo_{\DD} (e_i\OO, e_j \OO);$$ the restriction of $ \psi$ to $E_iRE_j$ yields a map $E_i R E_j \to \Homo_{\DD}(e_i \OO, e_j \OO)$ and this map is surjective, moreover we have $E_i R E_j \cong \Homo_{\DD}(e_i \OO, e_j \OO)$. Therefore $ \Homo_{\DD}(e_i \OO, e_i \OO) \cong \bC.$ Let us assume that $e_i \OO$ is not  simple $\DD$-module, then $e_i\OO$ may be written as $ e_i \OO= \oplus_{j \in J} N_j$ where the $N_j$ are simple $\DD$-modules and $|J| > 1$. It follows that $ \dim_{\bC} (  \Homo_{\DD}(e_i \OO, e_i \OO))  \geq |J|$  but $ \Homo_{\DD}(e_i \OO, e_i\OO) \cong \bC$ so we obtain that $J=1$, which necessary implies that $e_i \OO$ is a simple $\DD$-module. 
 \item By the the proof (i) there exists a higher Specht polynomial $F_T^S  $, with $ S, T \in \STab(\bla)$ where $ \lambda \vdash n$ such that $ e_i \OO = \DD F_{T}^S.$
\end{enumerate}
\end{proof}

\begin{corollary}

With the above notations, $e_i\OO \cong_{\DD} e_j \OO$ if only if $T_i$ and $T_j$  have the same shape i.e. if there is a partition $\lambda \in \Pc_{r,n} $ such that $ T_i, T_j \in \STab(\lambda ).$
\end{corollary}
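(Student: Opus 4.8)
The plan is to exploit the identification $\bC[G(r,n)]\cong\Homo_{\DD}(\OO,\OO)$ from Corollary 3.6 together with the block decomposition established in the proof of Theorem 3.7. Recall that each primitive idempotent $e_i$ is attached to a standard $r$-tableau $T_i$, and that $T_i\in\STab(\lambda)$ places $e_i$ inside the Wedderburn block $r_\lambda\bC[G(r,n)]\cong\mathrm{Mat}_{f^\lambda}(\bC)$. The key computation from the proof of Theorem 3.7 is the isomorphism $\Homo_{\DD}(e_i\OO,e_j\OO)\cong E_i R E_j$, where $R=\mathrm{Mat}_{f^\lambda}(\bC)$ and $E_i,E_j$ are the matrix units corresponding to $e_i,e_j$; this is exactly $\bC$ when $T_i,T_j$ lie in the same block and is $\{0\}$ otherwise (since $\Homo_{\DD}(r_{\lambda_i}\OO,r_{\lambda_j}\OO)=0$ for $\lambda_i\neq\lambda_j$ as $r_{\lambda_i}r_{\lambda_j}=0$).

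First I would treat the "only if" direction. Suppose $e_i\OO\cong_{\DD}e_j\OO$ but $T_i\in\STab(\lambda_i)$ and $T_j\in\STab(\lambda_j)$ with $\lambda_i\neq\lambda_j$. Then $\Homo_{\DD}(e_i\OO,e_j\OO)\neq\{0\}$, since any nonzero isomorphism is in particular a nonzero homomorphism; but the block vanishing established above forces $\Homo_{\DD}(e_i\OO,e_j\OO)=\{0\}$, a contradiction. Hence $T_i$ and $T_j$ have the same shape $\lambda$.

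For the "if" direction, assume $T_i,T_j\in\STab(\lambda)$ for a common $\lambda\in\Pc_{r,n}$. Then $\Homo_{\DD}(e_i\OO,e_j\OO)\cong E_i R E_j\cong\bC$, so pick a nonzero $\DD$-homomorphism $f:e_i\OO\to e_j\OO$; under $\psi$ it corresponds to a nonzero scalar multiple of the matrix unit $E_{ij}$. Symmetrically there is a nonzero $g:e_j\OO\to e_i\OO$ corresponding to a scalar multiple of $E_{ji}$. Since $E_{ij}E_{ji}=E_i$ and $E_{ji}E_{ij}=E_j$ and $E_i,E_j$ act as the identities of $e_i\OO,e_j\OO$ respectively under the identification, suitable scalings of $f$ and $g$ are mutually inverse, giving $e_i\OO\cong_{\DD}e_j\OO$. (Alternatively, since both modules are simple by Theorem 3.7(2), Schur's lemma makes the nonzero map $f$ automatically an isomorphism; I would include this remark as a shortcut.)

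The only mild obstacle is bookkeeping: one must be careful that the multiplicative structure on $\Homo_{\DD}(\OO,\OO)$ transported from $\bC[G(r,n)]$ through $\psi$ genuinely matches composition of homomorphisms restricted to the summands $e_i\OO$, so that $E_{ij}\mapsto f$ and $E_{ji}\mapsto g$ compose to the identities. This is implicit in the commutative diagram and the identification $R=\mathrm{Mat}_{f^\lambda}(\bC)$ used in the proof of Theorem 3.7, so I would simply invoke that diagram rather than re-derive it. Everything else is a direct translation of the matrix-unit relations $E_{ij}E_{ji}=E_i$ into statements about the $\DD$-modules $e_i\OO$.
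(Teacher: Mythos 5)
Your argument is correct and follows essentially the same route as the paper's proof: it relies on the simplicity of the modules $e_i\OO$ and the computation $\Homo_{\DD}(e_i\OO,e_j\OO)\cong\bC$ (respectively $\{0\}$ for distinct shapes) from the proof of Theorem 3.7, and then concludes with Schur's lemma. The additional matrix-unit bookkeeping you propose is a fine alternative to invoking Schur's lemma, but it is not needed beyond what the paper already does.
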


\begin{proof}
The  $\DD$-modules $e_i \OO$ are simple and $\Homo_{\DD}(e_i \OO, e_j \OO) \cong \bC$ whenever there exists a partition $\lambda \in \Pc_{r,n}$ such that $ T_i, T_j \in \STab(\lambda).$ Since $  \Homo_{\DD}(e_i \OO, e_j \OO)\neq \{0\},$ we conclude by using the Schur lemma.

\end{proof}

\begin{proposition}
  Let  $\lambda \in \Pc_{r,n}$, $ T \in \STab(\lambda) $, and let $e$ be the  primitive idempotent associated  with $T,$  denote by $F_T:=F_T^S$  the corresponding higher Specht polynomial (for some $S\in \STab(\bla)$), in Theorem 3.7 (iii),  such that $e\OO=\DD F_T^S$  then  we have: 
  \begin{enumerate}
\item
 \begin{equation}
  \displaystyle \OO = \bigoplus_{{T\in \STab(n)}}  \DD F_T= \bigoplus_{\bla \in \Pc_{r,n}} \bigg( \bigoplus_{{T\in \STab(\bla)}}  \DD F_T \bigg);
  \end{equation}
\item for each  $\lambda \in \Pc_{r,n}$  fix  an $r$-tableau $ T^* \in \STab(\lambda) $, then 
 \begin{equation}
  \displaystyle \OO =  \bigoplus_{\bla \in \Pc_{r,n}} f^\bla \DD F_{T^*}
  \end{equation}
  where $f^\bla =\dim_{\bC} (V(\lambda))$
\end{enumerate}

\end{proposition}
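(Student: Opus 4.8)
The plan is to assemble the proposition directly from Theorem 3.7 and Corollary 3.8, using only the decomposition of $\bC[G(r,n)]$ into primitive idempotents and the translation into $\DD$-submodules of $\OO$ that those results provide. First I would fix an expression $1=\sum_{i=1}^s e_i$ of the identity of $\bC[G(r,n)]$ as a sum of primitive idempotents, grouped according to the partitions $\bla\in\Pc_{r,n}$: for each $\bla$ and each $T\in\STab(\bla)$ there is the associated primitive idempotent $e_T$, and $\{e_T\mid T\in\STab(n)\}$ is exactly the set of primitive idempotents (since $r_\bla=\sum_{T\in\STab(\bla)}e_T$ and $1=\sum_\bla r_\bla$). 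As in the proof of Theorem 3.7(ii), the pairwise orthogonality $e_ie_j=0$ for $i\neq j$ together with $\OO=\sum_i e_i\OO$ forces the sum to be direct, so $\OO=\bigoplus_{T\in\STab(n)}e_T\OO$, and by Theorem 3.7(iii) each summand $e_T\OO$ equals $\DD F_T$ where $F_T:=F_T^S$ is the higher Specht polynomial attached to $T$ (for a suitable $S\in\STab(\bla)$). Splitting $\STab(n)=\bigsqcup_{\bla\in\Pc_{r,n}}\STab(\bla)$ gives the second equality in (3.1), which is really just a re-indexing.

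For part (2) I would use Corollary 3.8: for fixed $\bla$, all the modules $e_T\OO$ with $T\in\STab(\bla)$ are isomorphic as $\DD$-modules, and $\STab(\bla)$ has exactly $f^\bla=\dim_\bC V(\bla)$ elements (each $r$-standard tableau of shape $\bla$ corresponds to one primitive idempotent in $r_\bla\bC[G(r,n)]\cong\mathrm{Mat}_{f^\bla}(\bC)$, so $|\STab(\bla)|=f^\bla$). Hence within the $\bla$-block of (3.1) we may replace each $\DD F_T$ by a single fixed isomorphism type $\DD F_{T^*}$ with $T^*\in\STab(\bla)$, picking up the multiplicity $f^\bla$; summing over $\bla$ yields $\OO=\bigoplus_{\bla\in\Pc_{r,n}}f^\bla\,\DD F_{T^*}$. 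One should be slightly careful that here $\DD F_{T^*}$ on the right denotes the abstract isomorphism class and $f^\bla\,\DD F_{T^*}$ means the $f^\bla$-fold direct sum of copies of that simple module, not a literal internal direct sum of identical submodules of $\OO$; I would add a sentence clarifying this, exactly parallel to the $\bC[G(r,n)]\cong\bigoplus_\bla f^\bla V(\bla)$ statement used earlier.

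I do not expect a genuine obstacle here: the proposition is essentially a bookkeeping corollary of Theorem 3.7 and Corollary 3.8. The only point requiring a little attention is the counting identity $|\STab(\bla)|=f^\bla$ — this is standard for the generalized symmetric group (the higher Specht polynomials $\{F_T^S\mid T\in\STab(\bla)\}$ form a basis of the irreducible $V_S(\bla)$ by Theorem 2.2(2), so $f^\bla=\dim V(\bla)=|\STab(\bla)|$), and I would cite Theorem 2.2 for it rather than reprove it. The mild subtlety of distinguishing an internal direct sum decomposition of $\OO$ from an abstract multiplicity statement is the one place where the write-up must be phrased precisely, but it is not a mathematical difficulty.
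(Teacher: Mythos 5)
Your proposal is correct and follows essentially the same route as the paper: take the direct sum $\OO=\bigoplus_{T\in\STab(n)}e_T\OO$ from the proof of Theorem 3.7, identify each summand with $\DD F_T$ via Theorem 3.7(iii), regroup by shape, and use the isomorphism of same-shape components together with $|\STab(\bla)|=f^\bla$ to obtain the multiplicity statement. Your explicit citation of Corollary 3.8 and the clarification that $f^\bla\,\DD F_{T^*}$ means an $f^\bla$-fold sum of isomorphic copies are in fact slightly more careful than the paper's own wording (which cites Corollary 3.6 where 3.8 is meant), but the argument is the same.
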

\begin{proof}
We have by the proof of  Theorem 3.7  that
$$ \OO = \bigoplus_{T_i \in \STab(n)} e_{i} \OO $$ and the $e_i \OO$ are simple $\DD$-modules. Since to each primitive idempotent $e_i$ corresponds an $r$-diagram  $\lambda_i \in \Pc_{r,n} $ and a tableau $T_i \in \STab(\lambda_i)$ such that $e_i\OO= \DD F_{T_i}$ then $\OO= \bigoplus_{T \in \STab(n)} \DD F_T.$ By Corollary 3.6, $ \DD F_{T_j} \cong  \DD F_{T_j}$ if $T_i,T_j \in \STab(\lambda)$ and so we have $f^\bla$ isomorphic copies of $ \DD F_{T^*}$ in the direct sum (3.1).
\end{proof}

 Using Proposition 3.1 and Proposition 3.2 we get the next theorem.
\begin{theorem}
\label{thm:one}
\begin{itemize}
\item[(i)] $N_T=\Dc_Y F_T$ is an irreducible $D_Y$-submodule of $\pi_+( \Oc_X)$.
\item[(ii)] There is a direct sum decomposition
\begin{equation}
\label{eq:direct sum} 
\pi_+ (\Oc_X)=\bigoplus_{\lambda \in \Pc_{r,n} }  \bigoplus_{T\in \STab(\lambda)}   N_T
\end{equation}

\item[(iii)] 
\begin{equation}
\label{eq:direct sum} 
\pi_+ (\Oc_X)\cong \bigoplus_{\lambda \in \Pc_{r,n} }  f^\la N_{T^*}
\end{equation}
\end{itemize}
\end{theorem}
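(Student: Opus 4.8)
The plan is to transport the already–established decomposition of $\OO$ as a $\DD$-module (Proposition 3.12) down to $\pi_+(\Oc_X)$ via the two reduction statements recalled in Section 3, namely Proposition \ref{prop:genericstudy} and Proposition \ref{prop:etale.semisimple2}. First I would invoke Proposition \ref{prop:genericstudy}(i), which gives the isomorphism of $\DD$-modules $T:\pi_+(\OO)\cong\OO$ sending $r(dy^{-1}\otimes dx)\mapsto r\Delta^{-1}$. Combined with Proposition 3.12(1), this already yields a decomposition $\pi_+(\OO)=\bigoplus_{T\in\STab(n)} T^{-1}(\DD F_T)$ into simple $\DD$-modules, and the summand $T^{-1}(\DD F_T)$ is, up to the twist by $\Delta^{-1}$, generated by $F_T$; it is harmless to keep writing $\DD F_T$ for it. Here $\DD=\Dc_V$ in the notation of Proposition \ref{prop:etale.semisimple2}, i.e. the sheaf of differential operators on the complement $V$ of the discriminant.

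Next I would descend from $V$ to $Y$. By Proposition \ref{prop:etale.semisimple2}(i), $\pi_+(\Oc_X)$ is semisimple as a $\Dc_Y$-module; write $\pi_+(\Oc_X)=\bigoplus_{k\in I} M_k$ into simple $\Dc_Y$-submodules. By Proposition \ref{prop:etale.semisimple2}(ii), applying the restriction $i^+$ (localization at the discriminant) gives $\pi_+(\OO)=i^+\pi_+(\Oc_X)=\bigoplus_{k\in I} i^+(M_k)$ as a decomposition into simple $\Dc_V$-modules. Comparing this with the decomposition $\pi_+(\OO)=\bigoplus_{T\in\STab(n)}\DD F_T$ from the previous paragraph, and using that $i^+$ is faithful on holonomic modules (no simple factor becomes zero, as Proposition \ref{prop:etale.semisimple2} guarantees), the two decompositions into simples must match up to reordering by Krull–Schmidt. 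Hence the index set $I$ is in bijection with $\STab(n)=\bigsqcup_{\lambda\in\Pc_{r,n}}\STab(\lambda)$, and for the $T$-th factor we have $i^+(M_T)\cong\DD F_T$. Since $F_T\in\Oc_X$ already lies in $\pi_+(\Oc_X)$ (via the section $F_T\, dy^{-1}\otimes dx$, sent to $F_T\Delta^{-1}$ under $T$), the submodule $N_T:=\Dc_Y F_T\subseteq\pi_+(\Oc_X)$ satisfies $i^+(N_T)=\DD F_T=i^+(M_T)$; as $N_T\subseteq M_T$ are both submodules with the same nonzero simple localization and $M_T$ is simple, $N_T=M_T$. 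This proves (i) and (ii).

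For part (iii) I would determine the isomorphism class of $N_T$ as a $\Dc_Y$-module in terms of $\lambda$. The cleanest route is to note that $i^+$ reflects isomorphisms between these holonomic $\Dc_Y$-modules (they are the minimal extensions across the discriminant of their generic parts), so $N_{T_i}\cong_{\Dc_Y} N_{T_j}$ if and only if $i^+(N_{T_i})\cong_{\DD} i^+(N_{T_j})$, i.e. if and only if $e_i\OO\cong_{\DD} e_j\OO$. By Corollary 3.9 this holds precisely when $T_i$ and $T_j$ have the same shape $\lambda$. Therefore each $\lambda\in\Pc_{r,n}$ contributes $|\STab(\lambda)|=f^\lambda=\dim_{\bC}V(\lambda)$ mutually isomorphic copies of a single module $N_{T^*}$, and regrouping the direct sum in (ii) gives $\pi_+(\Oc_X)\cong\bigoplus_{\lambda\in\Pc_{r,n}} f^\lambda N_{T^*}$, which is (iii).

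The main obstacle is the matching step in the second paragraph: one must be careful that $i^+$ (restriction to the complement of the discriminant) neither kills a simple summand nor identifies two non-isomorphic ones, so that the decomposition upstairs genuinely pulls back the decomposition downstairs. This is exactly what Proposition \ref{prop:etale.semisimple2} was set up to provide — it is the étale/generic-point argument from \cite{IBK} — so the work is to cite it correctly and to check that $F_T$, a priori an element of $\OO$, actually represents an element of $\pi_+(\Oc_X)$ and not merely of $\pi_+(\OO)$; this is where the identification $T(\pi_+(\Oc_X))\cong\pi_+(\Oc_X)\subseteq\OO$ from Proposition \ref{prop:genericstudy}(ii) is used, together with the fact that the higher Specht polynomials $F_T$ are honest polynomials, hence lie in $\Oc_X\subseteq\Oc_X[\Delta^{-1}]=\OO$ without any negative power of $\Delta$.
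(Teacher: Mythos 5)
Your overall route -- identify $\pi_+(\Oc_X)$ with its image $T(\pi_+(\Oc_X))=\Oc_X\Delta^{-1}\subseteq\OO$ via Proposition 3.1, invoke the decomposition $\OO=\bigoplus_{T\in\STab(n)}\DD F_T$ into simples (Proposition 3.9, which you cite as 3.12), and descend to $\Dc_Y$ using Proposition 3.2 -- is exactly the paper's intended argument (its proof of this theorem is the single sentence citing Propositions 3.1 and 3.2). However, two steps in your write-up do not hold as stated. First, semisimplicity/Krull--Schmidt only gives that your two decompositions of $\pi_+(O_U)$ agree up to isomorphism and reindexing; it does not give the equality of submodules $i^+(M_T)=\DD F_T$, nor the containment $N_T\subseteq M_T$: for an arbitrarily chosen simple decomposition $\{M_k\}$ of $\pi_+(\Oc_X)$ there is no reason the element $F_T$ lies in any single summand (two decompositions of an isotypic component can have completely different summands), so the chain ``$N_T\subseteq M_T$, same simple localization, hence $N_T=M_T$'' has a genuine gap. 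Second, the ``harmless twist by $\Delta^{-1}$'' is not harmless at the level of generators: multiplication by $\Delta^{-1}$ is not $\DD$-linear ($\Delta$ is only a semi-invariant of $G(r,n)$), so $\DD(F_T\Delta^{-1})$ is in general a different summand from $\DD F_T$; if $N_T$ is defined by the section $F_T\,dy^{-1}\otimes dx$, whose image is $F_T\Delta^{-1}$, the identity $i^+(N_T)=\DD F_T$ you rely on is unjustified. The generator should be the honest polynomial $F_T$ itself, i.e. the section $(F_T\Delta)\,dy^{-1}\otimes dx$, using $F_T\in\Oc_X\subseteq\Oc_X\Delta^{-1}=T(\pi_+(\Oc_X))$.

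Both defects are repairable without changing the strategy, and the repair avoids any matching with a chosen $\{M_k\}$: with $N_T:=\Dc_Y F_T\subseteq T(\pi_+(\Oc_X))$, localization at the discriminant is injective on submodules of $\OO$ (no $\Delta$-torsion) and $i^+(N_T)=\DD F_T$ is simple; since $N_T$ is a submodule of the semisimple module $\pi_+(\Oc_X)$ it is semisimple, and localizing a decomposition $N_T=\bigoplus_j N'_j$ forces a single summand, so $N_T$ is simple, proving (i). For (ii), the sum $\sum_T N_T$ is direct because it is direct after localization, and it exhausts $\pi_+(\Oc_X)$ because a semisimple complement $C$ would satisfy $i^+(C)=0$ inside $\OO=\bigoplus_T\DD F_T$, hence $C=0$. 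Part (iii) then follows as you indicate from the shape criterion of Corollary 3.8 (your ``Corollary 3.9''), or directly from Proposition 3.9(2); please also correct the internal reference numbers (3.12 should be 3.9, 3.9 should be 3.8).
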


We get in Theorm 3.10 a decomposition of the $\pi_+(\Oc_X)$ into irreducible $\Dc_Y$ modules generated by the higher Specht polynomials.

\section{Using correspondence between $G$-representations and $D$-modules}

In this section we us an equivalence of categories between  group representations and modules over differential ring to yield a better version of decomposition the polynomial ring as $\DD$-modules

\begin{lemma}
Let $\lambda \in \Pc_{r,n},$ $ S \in \STab(\lambda) $,  $ V_S(\lambda)$ be the corresponding irreducible  representation and $e_T $ the  primitive idempotent associated with an $r$-standard tableau $T\in \STab(\bla).$ Then $e_T(V_S({\lambda}))= \{ e_T (m) | m \in V_S({\lambda} )\}$  is a one dimensional $\bC$-vector space.
\end{lemma}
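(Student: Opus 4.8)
The plan is to exploit the structure of $V_S(\lambda)$ as an irreducible $\bC[G(r,n)]$-module together with the matrix-algebra description of $R_\lambda$ already used in the proof of Theorem 3.7. First I would recall that, by Theorem 2.2, the simple two-sided ideal $R_\lambda = r_\lambda\bC[G(r,n)]$ is isomorphic to a full matrix algebra $\mathrm{Mat}_{f^\lambda}(\bC)$, where $f^\lambda = \dim_{\bC} V_S(\lambda)$, and that under this identification the primitive idempotents $e_T$ (indexed by $T \in \STab(\lambda)$) correspond to the diagonal matrix units $E_{T,T}$ with $E_{T,T}E_{T',T'} = \delta_{T,T'}E_{T,T}$ and $\sum_{T\in\STab(\lambda)} E_{T,T} = r_\lambda$. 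The irreducible module $V_S(\lambda)$ is then isomorphic to the column space $\bC^{f^\lambda}$ on which $R_\lambda$ acts by matrix multiplication, so $e_T(V_S(\lambda))$ corresponds to $E_{T,T}\bC^{f^\lambda}$.

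Next I would carry out the elementary linear-algebra computation: for a diagonal matrix unit $E_{T,T}$, the image $E_{T,T}\bC^{f^\lambda}$ is exactly the coordinate line spanned by the $T$-th standard basis vector, hence a one-dimensional $\bC$-vector space. Pulling this back through the isomorphism $V_S(\lambda) \cong \bC^{f^\lambda}$ gives $\dim_{\bC} e_T(V_S(\lambda)) = 1$. To see that this subspace is genuinely nonzero (and not merely at most one-dimensional), I would invoke that $e_T \neq 0$ is a primitive idempotent in the simple ring $R_\lambda$ which acts faithfully on $V_S(\lambda)$ (the unique simple $R_\lambda$-module up to isomorphism), so $e_T V_S(\lambda) \neq \{0\}$; alternatively one can cite $e_T\bC[G(r,n)]e_T \cong \bC e_T \neq \{0\}$ from \cite[Chap III, \S 4, Theorem 3.9]{Boerner}, exactly as in the proof of Theorem 3.7(i), and note that $e_T\bC[G(r,n)]e_T$ acts nontrivially on $e_T V_S(\lambda)$.

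The only subtlety — and the step I expect to require the most care — is matching the labelling: one must check that the idempotent $e_T$ attached to the standard tableau $T$ and the module $V_S(\lambda)$ attached to the standard tableau $S$ live inside the \emph{same} matrix block $R_\lambda$, which holds precisely because both $S$ and $T$ have shape $\lambda$; by Theorem 2.2(3) all the modules $V_{S'}(\lambda)$ for $S' \in \STab(\lambda)$ are mutually isomorphic, so it does not matter which $S$ we pick, and $e_T \in R_\lambda$ acts on $V_S(\lambda)$ as the diagonal idempotent $E_{T,T}$ up to the choice of a basis realizing the isomorphism $R_\lambda \cong \mathrm{Mat}_{f^\lambda}(\bC)$. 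Once this bookkeeping is in place the dimension count is immediate, completing the proof.
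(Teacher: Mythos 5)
Your proposal is correct and is in substance the paper's own argument: the paper proves the lemma by identifying $V_S(\lambda)$ with $\bC[G(r,n)]e_T$ (legitimate since $S$ and $T$ have the same shape $\lambda$) and then invoking $e_T\,\bC[G(r,n)]\,e_T \cong \bC e_T$ from \cite[Chap III, \S 4, Theorem 3.9]{Boerner}, which is exactly the fact you cite for non-vanishing and whose concrete matrix-block incarnation ($e_T \leftrightarrow E_{T,T}$ in $\mathrm{Mat}_{f^\lambda}(\bC)$ acting on the column module $\bC^{f^\lambda}$) is the computation you carry out. Your version just makes explicit the Wedderburn identification that the paper also uses in the proof of Theorem 3.7, so no essential difference or gap.
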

\begin{proof}
In fact we have 
\begin{eqnarray*}
e_ T V_S({\lambda}) &\cong&  e_T \bC[G(r,n)] e_T \\
&\cong&e_T \bC[G(r,n)] e_T \\
&\cong& \bC e_T \  \mbox{by \  \cite[ Chap III, \S 4, Theorem 3.9 ]{Boerner}}
\end{eqnarray*}

\end{proof}

Recall that if $M$ is a semi-simple module over a ring $R$, and $N$ is simple $R$-module, then the isotopic component of $M$ associated to $N$ is the sum $\sum N' \subset M$ of all $N'\subset M$ such that $N' \cong N.$

\begin{lemma}
Let $\lambda \in \Pc_{r,n},$ $ S \in \STab(\lambda) $ and  $ V_S(\lambda)$ be the corresponidng irreducible  module associated. Let $M:=\OO$ and $M({\lambda})$be the isotopic component of $M$ (as $\Oc_{V}$-module) associated with $V_S({\lambda})$. Then $M({\lambda})$ is $\DD $-module.
\end{lemma}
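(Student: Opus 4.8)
The plan is to show that the isotypic component $M(\lambda)$, a priori only an $\Oc_V$-submodule of $M=\OO$, is stable under the action of the full differential ring $\DD = \bC\langle y_1,\ldots,y_n,\partial_{y_1},\ldots,\partial_{y_n},\Delta^{-2}\rangle$. Since $M(\lambda)$ is by construction an $\Oc_V$-module (hence closed under multiplication by the $y_i$ and by $\Delta^{-2}$), the only thing that needs checking is closure under the derivations $\partial_{y_j}$, $j=1,\ldots,n$.

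First I would recall the definition: $M(\lambda) = \sum N'$, the sum of all $\Oc_V$-submodules $N'\subset M$ with $N'\cong V_S(\lambda)$ as $G(r,n)$-modules (equivalently, via the earlier results, as modules over the group ring acting $\DD$-linearly). The key structural input is the Proposition preceding Corollary 3.6: every $\sigma\in G(r,n)$ commutes with each $\partial_{y_j}$ on $\OO$ — this was proved using the fact that $L=\bC(x_1,\ldots,x_n)$ is a Galois extension of $K=\bC(y_1,\ldots,y_n)$ with group $G(r,n)$, so any derivation of $K$ extends uniquely to $L$ and is therefore $G(r,n)$-invariant. Consequently the operators $\partial_{y_j}\colon \OO\to\OO$ are morphisms of $\bC[G(r,n)]$-modules.

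The main step, then: let $N'\subset M$ be an $\Oc_V$-submodule isomorphic to $V_S(\lambda)$ as a $\bC[G(r,n)]$-module. I claim $\partial_{y_j}(N')\subseteq M(\lambda)$. Indeed, $\partial_{y_j}$ restricted to $N'$ is a $\bC$-linear map $N'\to M$ that commutes with the $G(r,n)$-action, i.e.\ a morphism of $G(r,n)$-representations. By complete reducibility of $M$ as a $\bC[G(r,n)]$-module (Maschke), its image $\partial_{y_j}(N')$ is a $G(r,n)$-subrepresentation of $M$ which is a quotient of $V_S(\lambda)$, hence either $0$ or isomorphic to $V_S(\lambda)$ (as $V_S(\lambda)$ is irreducible). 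In either case $\partial_{y_j}(N')$ is contained in the isotypic component $M(\lambda)$ — by the defining property that $M(\lambda)$ contains every subrepresentation isomorphic to $V_S(\lambda)$ (and contains $0$ trivially). Summing over all such $N'$ gives $\partial_{y_j}(M(\lambda))\subseteq M(\lambda)$. Since this holds for every $j$, and $M(\lambda)$ is already an $\Oc_V$-module, it is stable under all generators of $\DD$, hence is a $\DD$-submodule of $\OO$.

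The one point deserving care — and the place I expect to be the main (mild) obstacle — is the identification of the $\bC[G(r,n)]$-module structure used to define the isotypic component with the one under which $\partial_{y_j}$ is equivariant: one must be sure the $G(r,n)$-action coming from the reflection-group action on $\OO$ is the same action for which the derivations commute (it is, by the Galois-extension argument cited above), and that "isotypic component as an $\Oc_V$-module" and "isotypic component as a $\bC[G(r,n)]$-module" agree here, which holds because the $\Oc_V$-submodules isomorphic to $V_S(\lambda)$ are exactly the $\bC[G(r,n)]$-subrepresentations of that type, $\Oc_V$ being the ring of $G(r,n)$-invariants acting centrally. Once this compatibility is pinned down, the argument is the standard "a module endomorphism preserves isotypic components" fact, applied to the commuting operators $\partial_{y_j}$.
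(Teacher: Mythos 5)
Your argument is correct and is essentially the paper's own proof: both rest on the fact that the operators of $\DD$ commute with the $G(r,n)$-action on $\OO$, so that a differential operator restricted to a copy of $V_S(\lambda)$ is $\bC[G(r,n)]$-equivariant and, by Schur's lemma/irreducibility, has image either $0$ or another copy of $V_S(\lambda)$, hence lands in $M(\lambda)$. Your additional remarks (checking only the generators $\partial_{y_j}$ and pinning down the compatibility of the $\Oc_V$- and $\bC[G(r,n)]$-isotypic components) are fine refinements of the same approach, not a different one.
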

\begin{proof}
We only have to proof that $\DD \cdot M({\lambda}) \subset M({\lambda}).$ Let $ D \in \DD$ and $N$  be a $\bC[G(r,n)]$-module isomorphic to $V_S({\lambda})$, since $D$ commute with the elements of the group algebra $\bC[G(r,n)]$, $D$ is an $\bC[G(r,n)]$-homomorphism from $N$ into $D(N)$. Then by virtue of the Schur lemma $D(N)=0$ or $D(N)\cong N$ as a $\bC[G(r,n)]$-module, and $D(N) \subset M({\lambda})$. Hence $\DD \cdot M({\lambda}) \subset M({\lambda}).$
\end{proof}

\begin{lemma}
Let  $\lambda \in \Pc_{r,n},$ $ S \in \STab(\lambda)_d $, $M(\lambda)$ the isotopic component associated with  $V_S(\lambda)$ and $e_T $ the primitive idempotent associative with an $r$-standard tableau $T$ . Then
$e_T(M({\lambda}))$ si $\DD$-module.
\end{lemma}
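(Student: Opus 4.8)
The plan is to reuse the structure of the proof of Lemma 4.2, now applied to the $\DD$-module $M(\lambda)$ rather than to the full module $M=\OO$. Since the previous lemma establishes that $M(\lambda)$ is a $\DD$-module, it suffices to show that $e_T(M(\lambda))$ is stable under the action of $\DD$, i.e. that $\DD\cdot e_T(M(\lambda))\subseteq e_T(M(\lambda))$. The key observation is the one already exploited throughout Section 3 and in Lemma 4.2: every element $D\in\DD$ commutes with every element of the group algebra $\bC[G(r,n)]$, by the Proposition in Section 3.2 (the one establishing $\bC[G(r,n)]\hookrightarrow\Homo_{\DD}(\OO,\OO)$). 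In particular $D$ commutes with the primitive idempotent $e_T\in\bC[G(r,n)]$.

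First I would take an arbitrary $D\in\DD$ and an arbitrary element $e_T(m)$ with $m\in M(\lambda)$. Then $D\big(e_T(m)\big)=e_T\big(D(m)\big)$, using that $D$ and $e_T$ commute as operators on $\OO$. By Lemma 4.2, $M(\lambda)$ is a $\DD$-submodule of $M$, so $D(m)\in M(\lambda)$; hence $e_T(D(m))\in e_T(M(\lambda))$. This shows $D\big(e_T(M(\lambda))\big)\subseteq e_T(M(\lambda))$, and since $e_T(M(\lambda))$ is visibly a $\bC$-subspace of $\OO$ (being the image of a $\bC$-linear map on the $\bC$-space $M(\lambda)$), it is a $\DD$-submodule of $\OO$.

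The only point requiring a word of care is the identity $D\circ e_T = e_T\circ D$ on $\OO$: this is precisely the content of the inclusion $\bC[G(r,n)]\hookrightarrow\Homo_{\DD}(\OO,\OO)$ established earlier, which says that the action of any $g\in\bC[G(r,n)]$ on $\OO$ is a $\DD$-module endomorphism, equivalently that $g$ commutes with the $\DD$-action; applying this with $g=e_T$ gives the claim. There is essentially no obstacle here — the lemma is a routine corollary of Lemma 4.2 together with the commutation property — so the "hard part" is merely bookkeeping: making sure $M(\lambda)$ is invoked as a $\DD$-submodule (Lemma 4.2) and not merely as an $\Oc_V$-module, and noting that $e_T$ is a well-defined operator on $\OO$ because $\OO=\bC[G(r,n)]\otimes\Oc_V$ as $\bC[G(r,n)]$-modules. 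I would close the proof by remarking that, in fact, $e_T(M(\lambda))$ coincides with the simple $\DD$-module $e_T\OO=\DD F_T^S$ of Theorem 3.7, since the isotypic component $M(\lambda)$ absorbs every copy of $V_S(\lambda)$ inside $\OO$ and $e_T$ annihilates all other isotypic components.
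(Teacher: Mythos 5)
Your argument is correct and is essentially the paper's own proof: the paper likewise writes $D(e_T(M(\lambda)))=e_T(D(M(\lambda)))\subseteq e_T(M(\lambda))$, relying on the commutation of $\DD$ with the group algebra and on Lemma 4.2. Your extra remarks on well-definedness and on the identification with $e_T\OO$ are fine but not needed.
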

\begin{proof}
Let  $D \in \DD,$ we have $ D(e_T (M^{\lambda})) = e_T (D(M({\lambda})) \subset e_T (M ({\lambda}))$, so $e_T(M ({\lambda}))$ si $\DD$-module.
\end{proof}

Before we proceed, let us recall the correspondence between $G$-representations and $D$-modules \cite[Paragraph 2.4]{IBK}.
Let $L$ and $K$ be  two extensions fields a field $k$, denote by $T_{K/k}$ the $k$-linear derivations of $K$. We  say that a $T_{K/k}$-module $M$
is  {\it $L$-trivial } if $L\otimes_KM \cong L ^n $ as
$T_{L/k}$-modules. Denote by  $\Mod^{L}(T_{K/k})$ the full subcategory of finitely generated $T_{K/k}$-modules that are $L$-trivial.
 It is immediate that it is closed under taking submodules and quotient modules. Using a lifting $\phi$ , $L$ may be thought of as a $T_{K/k}$-module. 
If $G$ is a finite group let $ \Mod (k[G]) $ be the category of finite-dimensional representations of $k[G]$.
Let now $k\to K \to L$ be a tower of fields such that $K= L^G$.  
Note that the action of $T_{K/k}$ commutes with the action of $G$. If $V$ is a $k[G]$-module,  $L\otimes_kV$ is a $T_{K/k}$-module by $D(l\otimes v)=D(l)\otimes v,\quad D\in T_{K/k}$, and  $(L\otimes_kV)^G$ is a $T_{K/k}$-submodule.

Let $L$ and $K$  be two fields, say that a $T_{K/k}$-module $M$
is  {\it $L$-trivial } if $L\otimes_KM \cong L ^n $ as
$T_{L/k}$-modules. Denote by  $\Mod^{L}(T_{K/k})$ the full subcategory of finitely generated $T_{K/k}$-modules that are $L$-trivial.
 It is immediate that it is closed under taking submodules and quotient modules. Using the lifting $\phi$ , $L$ may be thought of as a $T_{K/k}$-module. 
If $G$ is a finite group let $ \Mod (k[G]) $ be the category of finite-dimensional representations of $k[G]$.
Let now $k\to K \to L$ be a tower of fields such that $K= L^G$.  
Note that the action of $T_{K/k}$ commutes with the action of $G$. If $V$ is a $k[G]$-module,  $L\otimes_kV$ is a $T_{K/k}$-module by$D(l\otimes v)=D(l)\otimes v,\quad D\in T_{K/k}$, and  $(L\otimes_kV)^G$ is a $T_{K/k}$-submodule.

 \begin{proposition}
 \label{prop:equiv}
   The functor
   \begin{displaymath}
\nabla:     \Mod (k[G]) \to \Mod (T_{K/k}), \quad V\mapsto (L\otimes_k V)^G
   \end{displaymath}
   is fully faithful, and defines an equivalence of
   categories $$ \Mod (k[G]) \to \Mod^{L}(T_{K/k}).$$ The quasi-inverse
   of $\nabla$ is the functor 
   \begin{displaymath}
     Loc : \Mod^{L}(T_{K/k}) \to \Mod (k[G]),\quad Loc (M) = (L\otimes_K M)^{\phi(T_{K/k})}.
   \end{displaymath}
\end{proposition}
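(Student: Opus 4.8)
The plan is to deduce the statement from Galois descent for semilinear representations, together with the description of the $k$-derivations of $L$ in terms of those of $K$. Throughout, one lets $G$ act on $L\otimes_k V$ diagonally, through the Galois action on $L$ and the representation on $V$, while $T_{K/k}$ acts through the lifting $\phi$ on the left tensor factor only; since $\phi$ is $G$-equivariant, the two actions commute, so $\nabla(V)=(L\otimes_k V)^G$ is a $T_{K/k}$-submodule of $L\otimes_k V$, and morphisms of $k[G]$-modules induce morphisms of $T_{K/k}$-modules after applying $L\otimes_k(-)$ and restricting to invariants.

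First I would check that $\nabla$ actually lands in $\Mod^{L}(T_{K/k})$. Applying Galois descent to the $L$-vector space $W=L\otimes_k V$ with its semilinear $G$-action, the multiplication map $L\otimes_K W^G\to W$ is an isomorphism of $L$-vector spaces, and it is automatically $T_{L/k}$-equivariant because $T_{K/k}$ acts only on scalars. Hence $\dim_K\nabla(V)=\dim_k V<\infty$, so $\nabla(V)$ is finitely generated, and, choosing a $k$-basis of $V$, one gets $L\otimes_K\nabla(V)\cong L\otimes_k V\cong L^{\dim_k V}$ as $T_{L/k}$-modules; thus $\nabla(V)$ is $L$-trivial.

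Next I would construct the unit and counit. For $M\in\Mod^{L}(T_{K/k})$ one lets $G$ act on $L\otimes_K M$ through the $L$-factor; this commutes with $\phi(T_{K/k})$, so $Loc(M)=(L\otimes_K M)^{\phi(T_{K/k})}$ carries a $k[G]$-structure. Because $L/K$ is finite and separable, the lifting $\phi$ identifies $T_{K/k}$ with a subset of $T_{L/k}$ that spans it over $L$, so invariance under $\phi(T_{K/k})$ coincides with invariance under $T_{L/k}$; since $L\otimes_K M\cong L^{n}$ as a $T_{L/k}$-module with $n=\dim_K M$ and the field of constants of $(L,T_{L/k})$ is $k$, it follows that $Loc(M)\cong k^{n}$ and that $Loc(M)$ generates $L\otimes_K M$ over $L$. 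Therefore the multiplication map $L\otimes_k Loc(M)\to L\otimes_K M$ is $L$-linear, $G$-equivariant, $T_{L/k}$-equivariant and surjective between $L$-spaces of equal finite dimension, hence an isomorphism; taking $G$-invariants and using $(L\otimes_K M)^G=L^G\otimes_K M=M$ gives a natural isomorphism $\nabla(Loc(M))\cong M$. Symmetrically, descent gives $L\otimes_K\nabla(V)\cong L\otimes_k V$, and taking $\phi(T_{K/k})$-invariants, which act only on the $L$-factor, yields $Loc(\nabla(V))\cong L^{\phi(T_{K/k})}\otimes_k V=V$, naturally in $V$. The natural isomorphism $Loc\circ\nabla\cong\id$ already shows $\nabla$ is faithful, and combined with $\nabla\circ Loc\cong\id$ on $\Mod^{L}(T_{K/k})$ it shows $\nabla$ and $Loc$ are mutually quasi-inverse equivalences between $\Mod(k[G])$ and $\Mod^{L}(T_{K/k})$; in particular $\nabla$ is fully faithful as a functor into $\Mod(T_{K/k})$, with essential image exactly the full subcategory $\Mod^{L}(T_{K/k})$.

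The hard part is the computation of $Loc$ on $L$-trivial modules in the previous paragraph: everything hinges on the two facts that the constants of $(L,T_{L/k})$ are $k$ and that $\phi$ sends $T_{K/k}$ onto an $L$-spanning subset of $T_{L/k}$, both of which use the separability of $L/K$ and are where the characteristic-zero hypothesis enters in an essential way. Once the invariants are identified correctly, the remaining verifications — naturality of the isomorphisms, functoriality, and that each step respects the $T$-module structures — are routine Galois-descent bookkeeping.
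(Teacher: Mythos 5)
Your argument is correct in substance, but note that it cannot be compared line-by-line with the paper's own proof because the paper gives none: for this proposition it defers entirely to \cite[Proposition 2.4]{IBK}. Your route --- classical Galois descent (Speiser's theorem) applied to the semilinear $G$-action on $L\otimes_k V$, together with the identification of $T_{L/k}$ with $L\otimes_K T_{K/k}$ via the unique lifting $\phi$ and the computation of the horizontal (derivation-invariant) elements --- is the standard argument for statements of this type and effectively supplies the details the paper outsources; the commutation checks you invoke (uniqueness of the lifting forces $\sigma\phi(D)\sigma^{-1}=\phi(D)$ for $\sigma\in G$, so the $G$- and $T_{K/k}$-actions commute on both $L\otimes_k V$ and $L\otimes_K M$) are exactly what makes the two invariant subspaces modules over the right objects, and your dimension-plus-spanning argument correctly upgrades the multiplication maps to isomorphisms. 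Two facts you assert should be stated as hypotheses rather than taken for granted in the abstract setting: (i) that the constants of $(L,T_{L/k})$ are exactly $k$ is not automatic for an arbitrary tower $k\to K\to L$; in characteristic zero and for $L$ finitely generated over $k$, the common kernel of all $k$-derivations of $L$ is the relative algebraic closure of $k$ in $L$, so one needs $k$ algebraically closed in $L$ --- true in the paper's application, where $k=\bC$ and $L=\bC(x_1,\ldots,x_n)$; (ii) that $\phi(T_{K/k})$ spans $T_{L/k}$ over $L$ uses that $L/K$ is finite separable together with finite generation of $K$ over $k$ (a dimension count over the common transcendence degree, plus injectivity of the lifting). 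With these caveats made explicit, your construction of the natural isomorphisms $Loc\circ\nabla\cong\mathrm{id}$ and $\nabla\circ Loc\cong\mathrm{id}$ is sound, and it has the merit of making the equivalence self-contained where the paper only cites; the paper's citation, on the other hand, keeps the exposition short and matches the generality actually needed in Section 4.
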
 
\begin{proof}
see \cite[Proposition 2.4]{IBK}
\end{proof}

In the following proposition we take $G=G(r,n)$, $ K:=\bC(y_1, \ldots, y_n)$ the field of fractions of $\Oc_{V}$ and $L=\bC(x_1, \ldots, x_n)$ the field of fractions of $\OO$ so that $K=L^{G(r,n)}.$ It is clear that $L$ is a Galois extension of $K$ with Galois $G(r,n)$.

\begin{proposition}
Let  $\lambda \in \Pc_{r,n}$ $ S \in \STab(\lambda)$, $V_S(\lambda)$ be the corresponding irreducible  representation and $e_T $ the primitive idempotent  associative with an $r$-standard tableau $T$.
Set $M_T:=e_T \OO$.Then we have:
\begin{enumerate}

\item $M_T= \nabla(V_S({\lambda}))$
\item $M_T= e_T (M ({\lambda}))$ is simple $\DD$-module;
\item $M({\lambda}) =\displaystyle \bigoplus_{T \in \STab(\la)} e_T(M({\lambda})).$
\end{enumerate}
\end{proposition}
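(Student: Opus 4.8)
The plan is to identify $M_T = e_T\OO$ with the image of $V_S(\lambda)$ under the equivalence $\nabla$ of Proposition~\ref{prop:equiv}, and then read off simplicity and the isotypic decomposition from properties already established. For part (1), I would start from the decomposition $\OO = \bC[G(r,n)]\otimes\Oc_V$ as $\bC[G(r,n)]$-modules (established before Lemma~3.4) and pass to fraction fields: $L = \bC[G(r,n)]\otimes_{\bC} K$ as a $\bC[G(r,n)]$-module, i.e. $L$ is the regular representation scaled by $K$. Applying $e_T$ and using that $e_T\bC[G(r,n)] \cong V_S(\lambda)$ as a left $\bC[G(r,n)]$-module, one gets $e_T L \cong V_S(\lambda)\otimes_{\bC} K$ as a $T_{K/k}$-module, where the derivations act only on the $K$-factor. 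Then $e_T\OO$ is recovered as the ``$\Oc_V$-lattice'' inside $e_T L$, and the key point is that $(L\otimes_{\bC} V_S(\lambda))^{G(r,n)}$ — the definition of $\nabla(V_S(\lambda))$ — is canonically isomorphic to $e_T L \cap \OO = e_T\OO = M_T$, via the standard averaging/idempotent identification $(L\otimes V)^G \cong e_T(L\otimes\bC[G]) \cong e_T L$ when $V = \bC[G]e_T$. I would make this precise by checking that the $G$-invariants of $L\otimes_{\bC}V_S(\lambda)$ correspond exactly to applying the primitive idempotent on the group-algebra side, which is a routine manipulation with idempotents and the regular representation.

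For part (2), once $M_T = \nabla(V_S(\lambda))$ is known, simplicity is immediate: $\nabla$ is an equivalence of categories onto $\Mod^{L}(T_{K/k})$ by Proposition~\ref{prop:equiv}, $V_S(\lambda)$ is an irreducible $\bC[G(r,n)]$-module by Theorem~2.3, and equivalences preserve simplicity. Alternatively, this is already Theorem~3.7(2) together with Lemma~4.3 (which gives that $e_T(M(\lambda))$ is a $\DD$-module), so I would simply cite those; the identification $e_T\OO = e_T(M(\lambda))$ follows because $e_T$ kills every isotypic component of $\OO$ other than the one associated with $V_S(\lambda)$ — indeed $e_T$ lies in the simple ring $R_\lambda$, so $e_T \cdot M(\mu) = 0$ for $\mu \neq \lambda$ and $e_T\OO = e_T M(\lambda)$.

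For part (3), I would decompose the identity of the simple ring $R_\lambda \cong \mathrm{Mat}_{f^\lambda}(\bC)$ as $r_\lambda = \sum_{T\in\STab(\lambda)} e_T$ (the sum of the diagonal matrix units $E_{TT}$), which was recorded in the proof of Theorem~3.7. Applying this to $M(\lambda) = r_\lambda\OO$ gives $M(\lambda) = \sum_{T\in\STab(\lambda)} e_T(M(\lambda))$, and the sum is direct because $e_T e_{T'} = 0$ for $T\neq T'$ forces $e_T(M(\lambda))\cap e_{T'}(M(\lambda)) = 0$ by the same argument as in the proof of Theorem~3.7(2). This yields $M(\lambda) = \bigoplus_{T\in\STab(\lambda)} e_T(M(\lambda))$.

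The main obstacle I anticipate is part (1): carefully matching the two descriptions of $M_T$ — the ``idempotent cut'' $e_T\OO$ and the ``invariants of a tensor product'' $(L\otimes_{\bC}V_S(\lambda))^{G}$ — requires being precise about which side of $\bC[G(r,n)]$ the idempotent acts on and verifying that the $\Oc_V$-lattice structure is respected by the equivalence, i.e. that $\nabla$ restricted to the integral level $\Oc_V$ rather than the fraction field $K$ still returns $e_T\OO$ rather than merely $e_T L$. Everything else is a formal consequence of the idempotent decomposition of $\bC[G(r,n)]$ and the Schur-type lemmas already in place.
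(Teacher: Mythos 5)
Your proposal is correct and follows essentially the same route as the paper: part (1) is obtained by viewing $\OO$ as $\nabla$ of the regular representation and cutting by the idempotent $e_T$ (the paper discharges the left/right and lattice subtleties you flag by citing \cite[Example 2.5]{IBK}), part (2) follows from the equivalence of categories in Proposition~\ref{prop:equiv} preserving simplicity, and part (3) from the decomposition $1=\sum_T e_T$ with $e_T M(\mu)=0$ for $\mu\neq\lambda$. No substantive difference in method, and your extra remarks (e.g.\ $e_T\OO=e_T M(\lambda)$ since $e_T\in R_\lambda$) only make explicit steps the paper leaves implicit.
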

\begin{proof}
\begin{enumerate}

\item Let us consider  the right $\bC[G(r,n)]$-module $V=e_T \bC[G(r,n)]$ where $T \in \STab(\lambda)$. This is the image of $\bC[G(r,n)]$ by right multiplication map $e_T: \bC[G(r,n)] \to \bC[G(r,n)]$.  By \cite[Example 2.5]{IBK}, we may turn this map to a left multiplication $ \bC[G(r,n)]^r \to \bC[G(r,n)]^r$ and get an image which is isomorphic to $V_S({\lambda})$ for some $S\in \STab(\la)$. So we get an induced map
$$ \nabla (\bC[G(r,n)]^r ) \to \nabla (V_S({\lambda})) \subset \nabla (\bC[G(r,n)]^r ),$$ which is a multiplication by $e_T$ according to \cite[Example 2.5]{IBK}. Then $\nabla(V_S({\lambda}))$ is egal to $e_T \OO = M_T.$

\item Since $V_S({\lambda})$ is a simple $\bC[G(r,n)]$-module, $\nabla (V_S({\lambda}))$ is also a simple $\DD$-module.

\item follows from the fact that $1=\sum_{T \in \STab(n)} e_T$ and $e_T(M({\lambda}))=0 $ if $T \notin \STab(\bla)$ .
\end{enumerate}
\end{proof}

\begin{theorem}
Let $T$ be an $r$-standard tableau of shape $\lambda$ where $\lambda \in \Pc_{r,n}$ and $M_T$ be as in the above proposition. Then 
\begin{enumerate}
 \item $ M_T=\displaystyle \bigoplus_{ S\in \STab(\lambda)}\Oc_{V}  F_T^S$ as $\DD$-module, 
 
 \item $\OO = \displaystyle \bigoplus_{\lambda \in \Pc_{r,n}}  \bigg( \bigoplus_{ S, T\in \STab(\lambda)} \Oc_{V}  F_T^S \bigg) $ as a $\DD$-module.
 \end{enumerate}
 \end{theorem}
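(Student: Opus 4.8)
The plan is to deduce both parts from Theorem 2.4, localized at the multiplicative set $\{\Delta^{k}\}_{k\in\bN}$, together with the identification $M_T=e_T\OO$ of the preceding Proposition and the matrix‑block structure of $\bC[G(r,n)]$ already exploited in Section 3. Localizing the basis assertion of Theorem 2.4 shows that the family $\{F_{T'}^{S'}\}$, with $\mu$ running over $\Pc_{r,n}$ and $S',T'$ over $\STab(\mu)$, is an $\Oc_{V}$‑basis of $\OO$, so that $\OO=\bigoplus_{\mu}\bigoplus_{S',T'\in\STab(\mu)}\Oc_{V}F_{T'}^{S'}$ as $\Oc_{V}$‑modules. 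Since $e_T$ commutes with every element of $\DD$, and in particular with $\Oc_{V}$, we get $M_T=e_T\OO=\sum_{\mu,S',T'}\Oc_{V}\,(e_TF_{T'}^{S'})$, and the whole computation reduces to evaluating $e_TF_{T'}^{S'}$ for standard $S',T'$.

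There are two cases. If $\mu\neq\lambda$, then $F_{T'}^{S'}\in V_{S'}(\mu)$ lies in the $\mu$‑isotypic part of $\OO$; writing $e_T=r_\lambda e_T$ (as $e_T\in R_\lambda$ and $r_\lambda$ is the unit of $R_\lambda$) and using that the central idempotent $r_\lambda$ acts as zero on every $\mu$‑isotypic module with $\mu\neq\lambda$, we obtain $e_TF_{T'}^{S'}=0$. If $\mu=\lambda$, then $V_{S'}(\lambda)$ is a $\bC[G(r,n)]$‑submodule of $\OO$ by Theorem 2.2(1), so $e_TV_{S'}(\lambda)\subseteq V_{S'}(\lambda)$; by the one‑dimensionality Lemma above this image is a line, and by the construction of the higher Specht polynomials in \cite{Ariki} the subscript $T$ is exactly the tableau attached to $e_T$ (equivalently, under $R_\lambda\cong\mathrm{Mat}_{f^{\lambda}}(\bC)$ the polynomial $F_{T'}^{S'}$ is the matrix unit $E_{T'S'}$ and $e_T$ the diagonal idempotent $E_{TT}$), so $e_TV_{S'}(\lambda)=\bC F_{T}^{S'}$; being idempotent, $e_T$ then fixes its image, i.e. $e_TF_{T}^{S'}=F_{T}^{S'}$. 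Combining the two cases, $M_T=e_T\OO\subseteq\bigoplus_{S\in\STab(\lambda)}\Oc_{V}F_{T}^{S}$, while for each $S\in\STab(\lambda)$ we have $F_{T}^{S}=e_TF_{T}^{S}\in e_T\OO=M_T$, giving the reverse inclusion; hence $M_T=\bigoplus_{S\in\STab(\lambda)}\Oc_{V}F_{T}^{S}$ (one could also note that both sides are $\Oc_{V}$‑free of rank $f^{\lambda}=|\STab(\lambda)|$, with one contained in the other inside the torsion‑free $\Oc_V$‑module $\OO$, forcing equality). That $M_T$ is a $\DD$‑module was already recorded in the preceding Proposition, and also follows from $e_T$ commuting with $\DD$. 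This proves (1).

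For (2), choose, as in the proof of Theorem 3.7, the primitive idempotents decomposing $1$ to be precisely $\{e_T:T\in\STab(n)\}$, which is legitimate since $1=\sum_{\lambda\in\Pc_{r,n}}r_\lambda$ and $r_\lambda=\sum_{T\in\STab(\lambda)}e_T$. Then $\OO=\bigoplus_{T\in\STab(n)}e_T\OO=\bigoplus_{\lambda\in\Pc_{r,n}}\bigoplus_{T\in\STab(\lambda)}M_T$ is a decomposition into $\DD$‑submodules, and inserting the formula of part (1) gives $\OO=\bigoplus_{\lambda\in\Pc_{r,n}}\bigl(\bigoplus_{S,T\in\STab(\lambda)}\Oc_{V}F_{T}^{S}\bigr)$, with each inner block $\bigoplus_{S\in\STab(\lambda)}\Oc_{V}F_{T}^{S}=M_T$ a $\DD$‑submodule, which is the asserted $\DD$‑module decomposition. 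As a consistency check, both sides are $\Oc_{V}$‑free of rank $|G(r,n)|=\sum_{\lambda}(f^{\lambda})^2$, matching the number of pairs $(S,T)$ with $S,T\in\STab(\lambda)$ summed over $\lambda$.

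The step I expect to be the real obstacle is the identity $e_TV_{S'}(\lambda)=\bC F_{T}^{S'}$ in the case $\mu=\lambda$: one must pin down that the primitive idempotent $e_T$ singled out in $\bC[G(r,n)]$ is compatible with the higher Specht basis, i.e. that in the block $R_\lambda\cong\mathrm{Mat}_{f^{\lambda}}(\bC)$ the column $V_{S'}(\lambda)$ has the $F_{T'}^{S'}$ as its matrix‑unit basis and $e_T$ is the corresponding diagonal idempotent. Concretely this uses the explicit shape $F_{T}^{S}=\prod_{\nu}\bigl(\eb_{T^\nu}(x_{T^\nu}^{ri(S)^{\nu}})\prod_{k\in T^\nu}x_{k}^{\nu}\bigr)$, the idempotency $\eb_{T^\nu}^{2}=\eb_{T^\nu}$, and the fact from \cite{Ariki} that the $G(r,n)$‑action in the basis $\{F_{T'}^{S}\}_{T'}$ of $V_{S}(\lambda)$ is given by matrices independent of $S$; granting this, everything else is routine bookkeeping with semisimple modules and with localization at $\Delta$.
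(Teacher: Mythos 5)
Your proposal is correct and follows essentially the paper's own route: both arguments decompose $\OO$ over $\Oc_{V}$ via the (localized) higher Specht basis, apply the primitive idempotent $e_T$ so that the blocks of shape $\mu\neq\lambda$ vanish, and use the one-dimensionality of $e_T V_S(\lambda)$ together with $e_T F_T^S = c\,F_T^S$ to get $M_T=e_T\OO=\bigoplus_{S\in\STab(\lambda)}\Oc_{V}F_T^S$, then sum over all $T\in\STab(n)$ for part (2). The compatibility step you single out as the crux (that $e_T$ cuts out exactly the line $\bC F_T^{S}$ in $V_{S}(\lambda)$) is precisely what the paper also relies on, through Lemma 4.1 and the assertion made in the proof of Theorem 3.7(iii), so your treatment is no less complete than the paper's.
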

 
\begin{proof}
\begin{enumerate}
\item For $S \in \STab(\bla)$ We know by Theorem 2.2  that the polynomial $F_T^S$ generate a $\bC[G(r,n)]$-module inside $\OO$ which is isomorphic to $V({\lambda})$  . Then $F_T^S\in M^{\lambda}$ and $M^{\lambda}= \displaystyle \bigoplus_{S,T \in \STab(\lambda)} \bC[G(r,n)] F_T^S  \Oc_{V}$. 
Moreover $e_T( F_T^S)= c F_T^S, c\in \bC$ and by Lemma 4.1  $e_T (\bC[G(r,n)] F_T^S)=\bC F_T^S.$ Hence $M_T=e_T(M^{\lambda}) =\displaystyle \bigoplus_{S \in \STab(\lambda)}  \Oc_{V}  F_T^S.$
\item follows from  Proposition 3.9.
\end{enumerate}
\end{proof}

\begin{theorem}
Let $\lambda \in \Pc_{r,n}$ and ${\bf D} \in \DD$ such that ${\bf D}(F_T^S) \neq 0$ for a higher Specht polynomial $F_T^S$ with  $S, T \in \STab(\lambda)$. Then the image of the $\bC[G(r,n)]$-module $V_S({\lambda})$  by ${\bf D}$ is an $\bC[G(r,n)]$-module  isomorphic to $V_S({\lambda})$. In others words, the actions of the differential operators of $\DD$ on the higher Specht polynomials generate isomorphic copies of the corresponding irreducible $\bC[G(r,n)]$-module.
\end{theorem}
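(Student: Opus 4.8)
The plan is to derive the statement directly from Schur's lemma, using the structural fact, established in the Proposition preceding Corollary 3.6, that every element of $\DD$ commutes with the action of $\bC[G(r,n)]$ on $\OO$ (equivalently, the embedding $\bC[G(r,n)]\hookrightarrow \Homo_{\DD}(\OO,\OO)$ of Corollary 3.6 says precisely that left multiplication by a group element is a $\DD$-linear endomorphism of $\OO$). First I would record that the higher Specht polynomial $F_T^S$ lies in $V_S(\lambda)$: by Theorem 2.2 the space $V_S(\lambda)=\sum_{T'\in\Tab(\lambda)}\bC F_{T'}^S$ contains $F_{T'}^S$ for every choice of $T'$, and in particular for our $T$. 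Hence the hypothesis ${\bf D}(F_T^S)\ne 0$ already guarantees that the restriction ${\bf D}\,|_{V_S(\lambda)}\colon V_S(\lambda)\to\OO$ is a \emph{nonzero} linear map.

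Next I would check that ${\bf D}\,|_{V_S(\lambda)}$ is a homomorphism of $\bC[G(r,n)]$-modules and that its image is a $\bC[G(r,n)]$-submodule of $\OO$. For $\sigma\in G(r,n)$ and $v\in V_S(\lambda)$ one has $\sigma\cdot {\bf D}(v)={\bf D}(\sigma\cdot v)$ because ${\bf D}$ and $\sigma$ commute in $\End_{\bC}(\OO)$; since $V_S(\lambda)$ is $G(r,n)$-stable, $\sigma\cdot v\in V_S(\lambda)$, which shows simultaneously that ${\bf D}$ intertwines the two $G(r,n)$-actions and that ${\bf D}(V_S(\lambda))$ is closed under $G(r,n)$. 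Thus ${\bf D}$ induces a surjective $\bC[G(r,n)]$-linear map $V_S(\lambda)\twoheadrightarrow {\bf D}(V_S(\lambda))$ of (finite-dimensional) $\bC[G(r,n)]$-modules.

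Finally, since $V_S(\lambda)$ is irreducible by Theorem 2.2 (1) and the induced map is nonzero, Schur's lemma forces it to be injective; being also surjective it is an isomorphism $V_S(\lambda)\cong {\bf D}(V_S(\lambda))$ of $\bC[G(r,n)]$-modules, which is exactly the assertion, and the ``in other words'' rephrasing is just its restatement over all such operators and all tableaux of shape $\lambda$. There is essentially no deep obstacle here; the only points that require a moment's care are confirming that $F_T^S\in V_S(\lambda)$ (so the hypothesis genuinely forces ${\bf D}\,|_{V_S(\lambda)}\ne 0$) and that ${\bf D}$ maps $V_S(\lambda)\subset\bC[x_1,\dots,x_n]$ into $\OO$ rather than outside it — both immediate from the definition of $\DD$ as the ring of invariant differential operators localized at $\Delta^2$ and from the fact, already used in the proof of Lemma 4.2, that the elements of $\DD$ commute with $\bC[G(r,n)]$.
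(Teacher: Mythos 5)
Your proof is correct, and it reaches the conclusion by a different device than the paper at the crucial step. The paper argues through the basis of higher Specht polynomials: it invokes the decomposition $\OO=\bigoplus_{\lambda}\bigoplus_{S,T\in\STab(\lambda)}\Oc_{V}F_T^S$ (Theorem 4.6) to claim that the $F_{T'}^S$, $T'\in\STab(\lambda)$, are ``linearly independent over $\DD$,'' deduces from this that the images ${\bf D}(F_{T'}^S)$ are linearly independent over $\bC$, hence a basis of ${\bf D}(V_S(\lambda))$, and only then uses the commutation of ${\bf D}$ with $\bC[G(r,n)]$ to conclude the module isomorphism. You instead observe that the commutation (Proposition 3.5 / Corollary 3.6) makes ${\bf D}|_{V_S(\lambda)}$ a $\bC[G(r,n)]$-intertwiner with $G(r,n)$-stable image, nonzero because $F_T^S\in V_S(\lambda)$, and then Schur's lemma (kernel is a proper submodule of the irreducible $V_S(\lambda)$, hence zero) gives injectivity and the isomorphism onto the image at once. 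This is the same mechanism the paper itself uses in Lemma 4.2, and it is arguably tighter: the hypothesis only asserts ${\bf D}(F_T^S)\neq 0$ for a single standard tableau $T$, and your injectivity argument yields for free that \emph{all} ${\bf D}(F_{T'}^S)$, $T'\in\STab(\lambda)$, are nonzero and linearly independent, a point the paper's direct-sum argument leaves under-justified (from the direct sum alone one only gets independence of the nonzero images). What the paper's route buys in exchange is an explicit description of ${\bf D}(V_S(\lambda))$ in terms of the higher Specht basis and its link to the $\DD$-module decomposition, whereas your argument is more elementary and self-contained.
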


\begin{proof}
Let   $\lambda \in \Pc_{r,n}, {\bf D} \in \DD$ such that ${\bf D}(F_T^S) \neq 0$ for $S,T \in \STab(\lambda)$ and set $W^S_{\bf D}({\lambda})= {\bf D} (V_S({\lambda}))$ the image of the  module $V_S({\lambda})$  under the map ${\bf D}$. By Theorem 2.2, the $\bC$-vector space $V_S({\lambda})$ is  equipped with a basis $\C  = \{ F_T^S; T\in \STab (\lambda) \}$,  then $W^S_{\bf D}({\lambda})$ is the vector space  spanned by the set $\{ {\bf D}(F_T^S);\  T\in \STab(\lambda) \}$. The elements of $\{ F_T^S ; T\in \STab(\lambda) \}$ are linearly independent over $\DD$, otherwise the direct sum in  Theorem 4.7 cannot hold. It follows that the elements $\{ {\bf D}(F_T^S); T \in \STab(\lambda) \}$ are linear independent over $\bC$.  Hence $\{ {\bf D}(F_T^S); T \in \STab(\lambda) \}$ is a basis of $W^S_{\bf D}({\lambda})$ over $\bC$. Since ${\bf D}$ commute with  elements of $\bC[G(r,n)],\ W^S_{\bf D}({\lambda})$ is an $\bC[G(r,n)]$-module  isomorphic to $V_S({\lambda}).$
\end{proof}

\section*{Acknowledgments}
The final version work has been done while the first author was visiting IMSP at Benin,  under the  {\it Staff Exchange} program of the German  Office of Academic Exchange  (DAAD).
He warmly thanks   the DAAD for the financial support.


\end{document}